\documentclass[12pt]{article}
\usepackage{epsfig,psfrag,amsmath,amssymb,latexsym}
\usepackage{amscd }
\usepackage{color}
\usepackage{amsfonts}
\usepackage{graphicx}
\usepackage{wasysym}
\usepackage{mathrsfs}
\pagestyle{plain} \oddsidemargin0cm \topmargin-.6cm
\textheight21.8cm \textwidth16cm
\parindent0.5cm

\numberwithin{equation}{section}
\newtheorem{thm}{Theorem}[section]

\newtheorem{theorem}[thm]{Theorem}
\newtheorem{corollary}[thm]{Corollary}
\newtheorem{fact}[thm]{Fact}
\newtheorem{definition}[thm]{Definition}
\newtheorem{lemma}[thm]{Lemma}
\newenvironment{proof}[1][Proof]{\textbf{#1.} }{\ \rule{0.5em}{0.5em}}

\newcommand{\htwo}{{H^{2|2}}}

\newcommand{\ka}{\kappa}
\newcommand{\al}{\alpha}
\newcommand{\be}{\beta}
\newcommand{\lastexit}{{\rm last\, exit}}
\newcommand{\id}{\operatorname{id}}
\newcommand{\Id}{\operatorname{Id}}
\newcommand{\susy}{\mathrm{susy}}
\newcommand{\bigsusy}{{\mathrm{big}}}
\newcommand{\single}{{\mathrm{single}}}
\newcommand{\supp}{\operatorname{supp}}
\newcommand{\discrete}{\operatorname{discrete}}
\newcommand{\K}{\mathcal{K}}

\newcommand{\pp}{\mathscr{P}}
\newcommand{\cL}{\mathcal{L}}
\newcommand{\cO}{\mathcal{O}}
\newcommand{\cQ}{\mathcal{Q}}
\newcommand{\cH}{\mathcal{H}}
\newcommand{\V}{{\mathscr{V}}}
\newcommand{\cV}{\mathcal{V}}
\newcommand{\R}{{\mathbb R}}  
\newcommand{\N}{{\mathbb N}}  
\newcommand{\Z}{{\mathbb Z}}  
\newcommand{\F}{{\mathcal{F}}}
\newcommand{\T}{{\mathcal{T}}}
\newcommand{\G}{{\mathcal{G}}}

\textwidth16cm
\textheight22.1cm
\topmargin-1.3cm
\oddsidemargin0cm
\evensidemargin0cm

\begin{document}
\thispagestyle{empty}

\begin{center}
{\LARGE Convergence of vertex-reinforced jump processes\\[-1mm]
to an extension of the \\[2mm] 
supersymmetric hyperbolic nonlinear sigma 
model\footnote{Key words: vertex-reinforced jump process, self-interacting 
random walks,  
supersymmetric hyperbolic nonlinear sigma model; 
2010 Mathematics Subject Classification. Primary 60K35, secondary 81T60}}\\[3mm]
{Franz Merkl\footnote{Mathematical Institute, Ludwig-Maximilians-Universit\"at M\"unchen,
Theresienstr.\ 39,
D-80333 Munich,
Germany.
E-mail: merkl@math.lmu.de
}
\hspace{1cm} 
Silke W.W.\ Rolles\footnote{Zentrum Mathematik, Bereich M5,
Technische Universit{\"{a}}t M{\"{u}}nchen,
D-85747 Garching bei M{\"{u}}nchen,
Germany.
E-mail: srolles@ma.tum.de}
\hspace{1cm} 
Pierre Tarr{\`e}s\footnote{NYU-ECNU 
Institute of Mathematical Sciences at NYU Shanghai, 
Courant Institute of Mathematical Sciences, New York, 
CNRS and Universit\'e Paris-Dauphine, 
PSL Research University, Ceremade,
75016 Paris, France.
E-mail: tarres@nyu.edu}
\\[3mm]
{\small \today}}\\[3mm]
\end{center}

\begin{abstract}
In this paper, we define an extension of the supersymmetric hyperbolic 
nonlinear sigma model introduced by Zirnbauer. We show that it arises
as a weak joint limit of a time-changed version introduced by 
Sabot and Tarr\`es of the vertex-reinforced jump process. 
It describes the asymptotics of rescaled crossing numbers, rescaled 
fluctuations of local times, asymptotic local times on a 
logarithmic scale, endpoints of paths, and last exit trees. 
\end{abstract}

\section{Introduction and results}

\subsection{Extension of the susy hyperbolic nonlinear sigma model}

The supersymmetric hyperbolic nonlinear sigma model, called $\htwo$ model
for short, was introduced by Zirnbauer in \cite{zirnbauer-91}. Concerning 
its original motivation, Zirnbauer writes that it may serve as a 
toy model for studying diffusion and localization in disordered one-electron systems. 
The $\htwo$ model is a statistical mechanics type model defined over a 
finite undirected graph $G=(V,E)$. Any undirected edge $\{i,j\}\in E$ is 
given a weight $W_{ij}=W_{ji}>0$. In its original form, which is not used
in this paper, the ``spin variables''
at any vertex take their value in a supermanifold $\htwo$ having the hyperbolic
plane $H^2$ as its base manifold. Written in so-called ``horospherical 
coordinates'', the model associates to any vertex $i\in V$ two kinds
of ``spin variables'': two real-valued variables $s_i$ and $u_i$ and two 
Grassmann (anticommuting) variables $\overline\psi_i$ and $\psi_i$.  
In the description with Grassmann variables, the model has useful 
supersymmetries as is shown in 
the paper \cite{disertori-spencer-zirnbauer2010} by Disertori, Spencer, and 
Zirnbauer; note that $u_i$ is called $t_i$ in that paper. 

However, in the current paper we use an equivalent purely probabilistic 
description of the $\htwo$ model where the Grassmann variables $\overline\psi_i$ 
and $\psi_i$ are replaced by a discrete variable $T'$ taking values 
in the set $\T$ of undirected spanning trees of $G$. Any undirected spanning
tree is viewed as a set of undirected edges. The tree variant of the $\htwo$ 
model was e.g.\ also used in \cite{disertori-merkl-rolles16-comparison}, 
formula (2.6). 
It has the disadvantage that the supersymmetries become hidden, but the 
advantage that it is phrased solely in probabilistic terms. 
It is defined as follows. 
Given a fixed reference point 
$i_0\in V$, the vectors $s=(s_i)_{i\in V}$ and $u=(u_i)_{i\in V}$ take values
in the set 
\begin{align}
\label{eq:def-Omega}
\Omega_{i_0}=\{u\in\R^V:u_{i_0}=0\}.
\end{align}

\begin{definition}{\bf (Tree version of the $\htwo$ model)}
\label{def:tree-version-H22}
The tree version of the supersymmetric hyperbolic nonlinear sigma model
is the following probability measure on $\Omega_{i_0}^2\times\T$:
\begin{align}
\mu_{i_0}^\susy(ds\, du\, dT') 
=&  
\exp\Big(\sum_{\{i,j\}\in E} W_{ij}\Big(
1-\cosh(u_i-u_j)-\frac12e^{u_i+u_j}(s_i-s_j)^2\Big)\Big) \nonumber\\
& \cdot \prod_{\{i,j\}\in T'}W_{ij}e^{u_i+u_j}
\prod_{i\in V\setminus\{i_0\}}\frac{e^{-u_i}\,ds_i\, du_i}{2\pi} 
 \cdot dT', 
\label{eq:marginal2-rho-susy}
\end{align}
where $ds_i$ and $du_i$ denote the Lebesgue measure on $\R$, and $dT'$ means 
the counting measure on $\T$. 
\end{definition}

It is a non-trivial fact that $\mu_{i_0}^\susy$ is a probability measure, i.e.
\begin{align}
\label{eq:normalizing-const}
\mu_{i_0}^\susy(\Omega_{i_0}^2\times\T)=1.
\end{align} 
There are at least three different proofs of this fact, given in
\cite{disertori-spencer-zirnbauer2010}, \cite{sabot-tarres2012}, and 
\cite{sabot-tarres-zeng2015}. Precise references and some more comments on this 
are given in appendix \ref{sec:app-review-results}, which reviews known results 
used here.

\paragraph{Aim of this paper.}
The main goal of the present paper is to give an interpretation of all 
random variables $s,u,T'$, jointly distributed according to $\mu_{i_0}^\susy$, in 
terms of limits of vertex-reinforced jump processes. For linearly edge-reinforced random 
walks, which are processes in discrete time, a similar asymptotic analysis 
was given in \cite{Keane-Rolles2000}; see also Theorem 3.2 in 
\cite{merkl-rolles-festschrift-2006}. However, due to continuous time, the 
analysis in the current paper requires additional considerations, in particular, 
when dealing with local times and their fluctuations. 
In the present setup, even more random variables than only $s,u$, and $T'$ 
occur naturally: $v=(v_i)_{i\in V}\in\Omega_{i_0}$, $i_1\in V$,
$i_1'\in V$, a second spanning tree $T$ playing a similar role as $T'$, and 
two vectors $\kappa$ and $\kappa'$. More precisely, one may view 
$\kappa$ and $\kappa'$ as currents flowing through the edges of the 
graph. They take values in the space $\cH$ of sourceless currents, defined as 
follows: Let $\vec{E}=\{(i,j):\{i,j\}\in E\}$ denote the set of directed 
edges, where each undirected edge in $E$ is replaced
by two directed edges with opposite directions. 
Let $\cH$ denote the linear subspace of $\R^{\vec E}$ consisting of all 
$\kappa=(\kappa_{ij})_{(i,j)\in\vec E}$ that satisfy the homogeneous Kirchhoff rules given by 
\begin{align}
\label{eq:Kirchhoff-kappa}
\sum_{\substack{j\in V:\\ \{i,j\}\in E}}(\kappa_{ij}-\kappa_{ji})=0\quad\text{for all }i\in V.
\end{align}
We endow $\cH$ with the Lebesgue measure $d\kappa_\cH$ defined as follows: 
Take any directed reference spanning tree $\vec T_0$ of $G$. Note that the restriction 
$\iota:\cH\to\R^{\vec E\setminus\vec T_0}$
of the restriction map $\R^{\vec E}\to\R^{\vec E\setminus\vec T_0}$ is an isomorphism. 
Let $d\kappa_\cH$ be the image under $\iota^{-1}$ of the Lebesgue measure on 
$\R^{\vec E\setminus\vec T_0}$. Note that $d\kappa_\cH$ does not depend on the 
choice of $\vec T_0$. For $\{i,j\}\in E$, we abbreviate
\begin{align}
\label{eq:def-omega}
\omega_{ij}= \frac{W_{ij}}{2}e^{v_i+v_j}, \quad 
\omega_{ij}'= \frac{W_{ij}}{2}e^{u_i+u_j}.
\end{align}
Using this, we define an extended version of the supersymmetric hyperbolic 
nonlinear sigma model that involves not only the original variables $s,u$, and 
$T'$, but also the new variables $\kappa,\kappa',v,i_1,i_1'$, and $T$.

\begin{definition}{\bf (An extended version of the $\htwo$ model)}
\label{def:h2-extended}
We define the function 
$\rho^\bigsusy_{i_0}:(\R^{\vec E})^2\times\Omega_{i_0}^3\times V^2\times\T^2\to\R^+$
by 
\begin{align}
\label{eq:def-rho-susy}
& \rho^\bigsusy_{i_0}=\rho^\bigsusy_{i_0}(\kappa,\kappa',s,v,u,i_1,i_1',T,T')\\
=&  \frac{4^{|V|-1}}{(2\pi)^{2|E|}}
\exp\Big(\sum_{\{i,j\}\in E} W_{ij}\Big(
1-\cosh(u_i-u_j)-\frac12e^{u_i+u_j}(s_i-s_j)^2\Big)\Big) 
\cdot \prod_{\{i,j\}\in T}\omega_{ij}' \nonumber\\
& \cdot\prod_{\{i,j\}\in T'}\omega_{ij}'\cdot\!
\prod_{\{i,j\}\in E}\frac{1}{(\omega_{ij}')^2}
\cdot\exp\Big(
-\sum_{(i,j)\in\vec E} \frac{\kappa_{ij}^2+(\kappa_{ij}')^2}{2\omega_{ij}'}
\Big) 
\frac{e^{2v_{i_1}+2u_{i_1'}}}{\sum_{i\in V}e^{2 v_i}\sum_{j\in V}e^{2 u_j}}
\prod_{i\in V\setminus\{i_0\}}e^{-u_i}.
\nonumber
\end{align}
Furthermore, we define the measure 
\begin{align}
\label{eq:def-mu-susy}
&\mu^\bigsusy_{i_0}(d\kappa\, d\kappa'\, ds\, du\, dv\, di_1\, di_1'\, dT\, dT')\cr
=&  \rho_{i_0}^\bigsusy \, d\kappa_\cH\, d\kappa_\cH' 
\prod_{i\in V\setminus\{i_0\}} 1_{\{u_i=v_i\}}\, ds_i\, du_i
\cdot di_1\, di_1'\, dT\, dT'
\end{align}
on $\cH^2\times\Omega_{i_0}^3\times V^2\times\T^2$, where in the last
expression $1_{\{u_i=v_i\}}du_i$ denotes the Lebesgue measure on the diagonal of $\R^2$, 
$di_1,di_1'$ denote the counting measure on $V$ and $dT,dT'$ mean the counting measure
on $\T$. 
\end{definition}

The reader might wonder why we introduce a seemingly redundant variable 
$v=u$. The reason becomes clear below when we describe the asymptotics 
of vertex-reinforced jump processes; cf.\ the considerations following \eqref{eq:def-u-v} 
and Theorem \ref{thm:claim-thm-asymptotics-collected}. 

The following theorem explains the link between the extended $\htwo$ 
model and the tree version of the $\htwo$ model.

\begin{theorem}{\bf (The $\htwo$ model as a marginal of its extended version)}
\label{thm:marginal-rho-susy}\\
The marginal of $(s,u,T')$ with respect to $\mu_{i_0}^\bigsusy$ 
equals the tree version of the non-linear supersymmetric sigma model: 
\begin{align}
\cL_{\mu_{i_0}^\bigsusy}(s,u,T') = \mu_{i_0}^\susy.
\label{eq:marginal2-big-susy}
\end{align}
In particular, $\mu_{i_0}^\bigsusy$ is a probability measure. 
\end{theorem}

In the interpretation of the extended $\htwo$ model in terms of vertex-reinforced
jump processes explained in the next subsection, two different time scales 
$\sigma\ll\sigma'$ play a role.
Then, the list of variables $\kappa,\kappa',s,v,u,i_1,i_1',T,T'$ splits into 
two groups: $\kappa,v,i_1$, and $T$ involve only the first time scale $\sigma$, 
whereas the remaining variables $\kappa',s,u,i_1'$, and $T'$ involve both 
time scales. This is why we are also interested in the following marginal of 
the extended $\htwo$ model.

\begin{theorem}{\bf (Single-time-scale marginal)}
\label{thm:second-marginal-rho-susy}
The marginal of $(\kappa,v,i_1,T)$ with respect to $\mu_{i_0}^\bigsusy$ 
equals the following probability measure $\mu_{i_0}^\single$
on $\cH\times\Omega_{i_0}\times V\times\T$:
\begin{align}
\label{eq:def-mu-single}
\mu_{i_0}^\single(d\kappa\, dv\, di_1\, dT) =\rho_{i_0}^\single
d\kappa_\cH\prod_{i\in V\setminus\{i_0\}} dv_i\cdot di_1\, dT
\end{align}
with the density 
\begin{align}
\rho_{i_0}^\single(\kappa,v,i_1,T) 
= & \frac{1}{\pi^{|E|}}
\exp\Big(\sum_{\{i,j\}\in E} W_{ij}(
1-\cosh(v_i-v_j))\Big) 
\cdot\prod_{\{i,j\}\in E\setminus T}\frac{1}{W_{ij}e^{v_i+v_j}}\nonumber\\
& \cdot\exp\Big(
-\sum_{(i,j)\in\vec E} \frac{\kappa_{ij}^2}{W_{ij}e^{v_i+v_j}}
\Big) 
\frac{e^{2v_{i_1}}}{\sum_{i\in V}e^{2 v_i}}
\prod_{i\in V\setminus\{i_0\}}e^{-v_i}. 
\label{eq:def-rho-single}
\end{align}
\end{theorem}

\subsection{Vertex-reinforced jump processes}

Consider again a finite undirected graph $G$
with edge weights $W=(W_{ij})_{i,j\in V}$, $W_{ij}=W_{ji}$, where 
we set $W_{ij}=0$ for $i,j\in V$ with $\{i,j\}\not\in E$ to simplify notation. 
The vertex-reinforced jump process (VRJP) is a stochastic jump process $Y=(Y_t)_{t\ge 0}$ 
in continuous time with c\`adl\`ag paths, taking values in the vertex set $V$ 
of $G$. The process starts in $Y_0=i_0\in V$. Let $P_{i_0}$ denote the underlying
probability measure. The jump rates are defined in terms of the local
times with offset 1 given by 
\begin{align}
L_i(t)=1+\int_0^t 1_{\{Y_\tau=i\}}\, d\tau, \quad i\in V,
\end{align}
at times $t\ge 0$. In other words, the local time $L_i(t)$ is 1 plus the time 
the process $Y$ spends in vertex $i$ up to time $t$. 
Given two different vertices $i,j\in V$, 
a time $t\ge 0$, and another (small) time $\Delta t>0$, on the event 
$\{Y_t=i\}$
and conditionally on the past $\F_t=\sigma(Y_\tau: 0\le\tau\le t)$ up to time $t$, 
its jump probability is given by 
\begin{align}
\label{eq:jump-rate-Y}
P_{i_0}(Y_{t+\Delta t}=j|\F_t,Y_t=i)
=W_{ij} L_j(t) \Delta t+o(\Delta t)
\quad P_{i_0}\text{-a.s.\ as }\Delta t\downarrow 0. 
\end{align}
In other words, the process has the jump rates $W_{ij} L_j(t)$.

In the following we consider the time-changed version of the vertex-reinforced 
jump process $Z=(Z_\sigma)_{\sigma\ge 0}$ on $G$ which was introduced in 
\cite{sabot-tarres2012}. Let us review its definition. The time change is defined by 
\begin{align}
\label{eq:def-D}
D(t)=\sum_{i\in V} (L_i(t)^2-1). 
\end{align}
The time-changed version $Z$ is defined by 
\begin{align}
Z_\sigma=Y_{D^{-1}(\sigma)},\quad \sigma\ge 0.
\end{align}
The local time $l(\sigma)=(l_i(\sigma))_{i\in V}$ (without offset) 
of the process $Z$ is defined by 
\begin{align}
l_i(\sigma)=\int_0^\sigma1_{\{Z_{\zeta}=i\}}\, d\zeta
=L_i(D^{-1}(\sigma))^2-1, \quad i\in V.
\end{align}
The second equality in the last display is just another way of writing 
equation (4.13) in \cite{sabot-tarres2012}.
The jump rates of the process $Y$ specified in \eqref{eq:jump-rate-Y}
are transformed by the time-change as follows. 
Given two different vertices $i,j\in V$, 
a time $\sigma\ge 0$, and another (small) time $\Delta\sigma>0$, on the event 
$\{Z_\sigma=i\}$
and conditionally on the past $\G_\sigma=\sigma(Z_\zeta: 0\le\zeta\le \sigma)$ up to 
time $\sigma$, its jump probability is given by 
\begin{align}
P_{i_0}(Z_{\sigma+\Delta\sigma}=j|\G_\sigma,Z_\sigma=i)
=\frac{W_{ij}}{2}\sqrt{\frac{1+l_j(\sigma)}{1+l_i(\sigma)}}\Delta\sigma+o(\Delta\sigma)
\quad P_{i_0}\text{-a.s.\ as }\Delta\sigma\downarrow 0. 
\end{align}
In other words, the process $Z$ has the jump rates 
$\frac{W_{ij}}{2}\sqrt{\frac{1+l_j(\sigma)}{1+l_i(\sigma)}}$. 

\paragraph{History of the model.} 
The vertex-reinforced jump process was initially proposed by Werner and 
introduced by Davis and Volkov in \cite{davis-volkov1} on the integers and 
studied in \cite{davis-volkov2} on trees. Further analysis on 
regular and Galton-Watson trees was conducted by 
Collevecchio in \cite{collevecchio2} and \cite{collevecchio1} and by
Basdevant and Singh in \cite{basdevant-singh2012}. 

Tarr\`es in \cite{tarres2011} and 
Sabot and Tarr\`es in \cite{sabot-tarres2012} showed that
the VRJP is related to the linearly edge-reinforced random walk. 
In \cite{sabot-tarres2012}, Sabot and Tarr\`es also showed that the 
VRJP is associated to the 
$u$-marginal of the supersymmetric hyperbolic sigma model $\htwo$ defined above;
cf.\ formula \eqref{eq:u-as-limit-for-vrjp}, below. 
Using this second link and the results from \cite{disertori-spencer-zirnbauer2010}
and \cite{disertori-spencer2010}, they proved in the same paper recurrence of VRJP on any graph 
of bounded degree for strong reinforcement, i.e.\ $W_{ij}$ small for all edges $\{i,j\}$, 
and transience on $\Z^d$, $d\ge 3$, for small constant reinforcement, i.e.\ $W_{ij}$ large 
and constant for all edges. 
Disertori, Sabot, and Tarr\`es \cite{disertori-sabot-tarres2014} give a generalization 
to non-constant reinforcement. An alternative proof of recurrence of VRJP under the same 
conditions, not using the connection with the $\htwo$ model, was given by 
Angel, Crawford, and Kozma in \cite{angel-crawford-kozma}. 
In \cite{disertori-merkl-rolles2014}, Disertori, Merkl, and Rolles show recurrence of 
VRJP on two-sided infinite strips with translationally invariant $W_{ij}$. 

Further links of VRJP with a random Schr\"odinger operator and with Ray-Knight 
second generalized theorem were investigated by Sabot, Tarr\`es, and Zeng in 
\cite{sabot-tarres-zeng2015}, \cite{sabot-zeng15}, and \cite{sabot-tarres2016}.

\paragraph{Current setup.}
The components $u$ in the $\htwo$ model have the following interpretation 
in terms of VRJP:  
Proposition 1 and Theorem 2 in \cite{sabot-tarres2012} imply that 
$l_i(\sigma)/l_{i_0}(\sigma)$, $i\in V$, converge jointly $P_{i_0}$-almost surely 
to a limit having the law 
\begin{align}
\label{eq:u-as-limit-for-vrjp}
\cL_{P_{i_0}}\left(\lim_{\sigma\to\infty}\left(\frac{l_i(\sigma)}{l_{i_0}(\sigma)}\right)_{i\in V}
\right)
=\cL_{\mu_{i_0}^\susy}\left(\left(e^{2u_i}\right)_{i\in V}\right)
\end{align}
with the tree version $\mu_{i_0}^\susy$ of the $\htwo$ model given in Definition 
\ref{def:tree-version-H22}. One of the goals of the current paper is to show 
that \emph{all} the components $\kappa, \kappa', s, u, v, i_1, i_1', T$, and $T'$
in the extended $\htwo$ model have an interpretation in 
terms of VRJP as well. We shall show below that the components $s_i$ can be
interpreted in terms of fluctuations of $l_i(\sigma)/l_{i_0}(\sigma)$ around 
its asymptotic value $\lim_{\sigma'\to\infty}l_i(\sigma')/l_{i_0}(\sigma')$. 
For this analysis, it is natural to consider two different 
time scales $1\ll\sigma\ll\sigma'$. 
We remark that introducing further time scales 
would not be relevant since it would only add conditionally independent 
copies of the same variables $s$, $\kappa$ and $T$. 
To make the result more accessible, we start in Section 
\ref{sec:single-time-scale} with single timescale 
versions of our statements which will be used to obtain directly the 
$\mu_{i_0}^\single$ marginal of Theorem \ref{thm:second-marginal-rho-susy}.

\subsection{Single timescale limit of VRJP}
\label{sec:single-time-scale}

\paragraph{Last exit trees.}
We shall show below that the component $T'$ in the $\htwo$ model 
has an interpretation in terms of last exit trees of the process $Z$. 
These last exit trees are defined as follows. 
Given a time interval $[\sigma_1,\sigma_2]$, let 
$V_{[\sigma_1,\sigma_2]}=\{Z_t:t\in[\sigma_1,\sigma_2]\}$
denote the set of all vertices visited between times $\sigma_1$ and $\sigma_2$. 
For $i\in V_{[\sigma_1,\sigma_2]}\setminus\{Z_{\sigma_2}\}$, let 
$e^\lastexit_i(\sigma_1,\sigma_2)$ denote the directed edge 
of the form $(i,j)$ which the process $Z$ has crossed when it left vertex $i$
for the last time during the time interval $[\sigma_1,\sigma_2]$. Let
\begin{align}
\vec T^\lastexit(\sigma_1,\sigma_2)
=\bigcup_{i\in V_{[\sigma_1,\sigma_2]}\setminus\{Z_{\sigma_2}\}}
\{e^\lastexit_i(\sigma_1,\sigma_2)\}
\end{align}
be the collection of directed edges taken by the process $Z$ for the last departures from 
all vertices visited in the time interval $[\sigma_1,\sigma_2]$ except the endpoint. 
Sometimes, we need also the undirected
version of $\vec T^\lastexit(\sigma_1,\sigma_2)$; it is denoted by 
$T^\lastexit(\sigma_1,\sigma_2)$. More generally, 
whenever we have a directed spanning 
tree $\vec T$, we denote its undirected version by $T$. 
If the process $Z$ has visited all vertices between times $\sigma_1$ and $\sigma_2$, 
then $\vec T^\lastexit(\sigma_1,\sigma_2)$ is a spanning tree of $G$, 
directed towards the endpoint $Z_{\sigma_2}$. For $i_1\in V$ let $\vec\T_{i_1}$ 
denote the set of spanning trees of $G$ which are directed towards $i_1$. 

\paragraph{Edge crossings and currents with sources.}
We define $k(\sigma)=(k_{ij}(\sigma))_{(i,j)\in\vec{E}}$ by 
\begin{align}
k_{ij}(\sigma)=|\{t\le\sigma: Z_{t-}=i,Z_t=j\}|, 
\end{align}
which denotes the number of crossings from $i$ to $j$ up to time $\sigma$. 
We denote by $\delta_i(j)=1_{\{i=j\}}$ Kronecker's delta.
For $i_0,i_1\in V$, let $\K_{i_0,i_1}$ denote the set of all 
$k\in\Z^{\vec E}$ such that the inhomogeneous Kirchhoff rules 
\begin{align}
\label{eq:Kirchhoff}
\sum_{\substack{j\in V:\\ \{i,j\}\in E}}(k_{ij}-k_{ji})=\delta_{i_0}(i)-\delta_{i_1}(i), \quad
i\in V,
\end{align}
hold. One can imagine $k$ as a current flowing through the graph with a source
of size 1 at $i_0$ and a sink of size 1 at $i_1$. 
We are only interested in edge crossings $k$ compatible with at least one
path from $i_0$ to $i_1$, which is guaranteed for $k$ in the set
\begin{align}
\K_{i_0,i_1}^+:=\K_{i_0,i_1}\cap\N^{\vec E}.
\end{align}
However, when comparing edge crossings $k$ and $\pi$ of two paths, 
differences $k-\pi\in\K_{i_0,i_1}\setminus\K_{i_0,i_1}^+$ can occur as well.
Let 
\begin{align}
\label{eq:def-L}
\cL_\sigma=\Big\{ l\in(0,\infty)^V:\sum_{i\in V}l_i=\sigma\Big\}.
\end{align}
For $k\in\N^{\vec E}$, $l\in(0,\infty)^V$, and a directed spanning tree 
$\vec T$, we abbreviate 
\begin{align}
\label{eq:def-pp}
\pp(k,l,\vec T)=
\prod_{(i,j)\in\vec{E}}\left(\frac{W_{ij}l_i}{2}\right)^{k_{ij}}\frac{1}{k_{ij}!}
\prod_{(i,j)\in\vec T}\frac{k_{ij}}{l_i}.
\end{align}

\begin{theorem}{\bf (Joint density of edge crossings, local times, and last exit trees)}
\label{thm:proba-of-a-path-single}
For $i_0,i_1\in V$, $k\in\K_{i_0,i_1}^+$, $\sigma>0$, 
$A\subseteq\cL_\sigma$ measurable, and $\vec T\in\vec\T_{i_1}$, we have 
\begin{align}
&P_{i_0}(k(\sigma)=k,l(\sigma)\in A,\vec T^\lastexit(0,\sigma)=\vec T)
\label{eq:density-path-single}\\
= & \int_A
\exp\Big(
\sum_{\{i,j\}\in E} W_{ij}\left(1-\sqrt{1+l_i}\sqrt{1+l_j}\right)\Big)
\prod_{i\in V\setminus\{i_1\}}\frac{1}{\sqrt{1+l_i}} 
\cdot \pp(k,l,\vec T) \prod_{i\in V\setminus\{i_0\}}dl_i.
\nonumber
\end{align}
\end{theorem}

It follows from Theorem \ref{thm:proba-of-a-path-single} that conditionally 
on $l(\sigma)$ and $\vec T^\lastexit(0,\sigma)$, the random variable $k(\sigma)$
follows a variant of a random current model, i.e.\ a product of Poisson distributions 
conditioned on Kirchhoff's rule. 

By a slight abuse of notation, we will abbreviate henceforth
\begin{eqnarray}
k=k(\sigma)\quad\text{ and }\quad l=l(\sigma).
\end{eqnarray}
On the event $\{l_i>0\text{ for all }i\in V\}$, motivated by 
\eqref{eq:u-as-limit-for-vrjp}, 
we introduce new variables $v_i=v_i(\sigma)$ for $i\in V$ by 
\begin{align}
\label{eq:def-v}
& l_i=l_{i_0}e^{2v_i}.
\end{align}
Since $v_{i_0}=0$, the vector $(v_i)_{i\in V}$ belongs to the
space $\Omega_{i_0}$ defined in \eqref{eq:def-Omega}.

\paragraph{Rescaling of crossing numbers.}
It turns out that the random variables $k_{ij}(\sigma)$ are centered 
roughly around $\frac12W_{ij}\sqrt{l_il_j}$, with fluctuations on the scale
$\sqrt{l_{i_0}}$. This motivates us to introduce, again on the event 
$\{l_i>0\text{ for all }i\in V\}$, the rescaled 
crossing numbers $\kappa_{ij}=\kappa_{ij}(\sigma)$ for $(i,j)\in\vec E$ by 
\begin{align}
& \kappa_{ij}=\frac{k_{ij}-\frac12W_{ij}\sqrt{l_il_j}}{\sqrt{l_{i_0}}}
=\frac{k_{ij}}{\sqrt{l_{i_0}}}-\frac{W_{ij}}{2}e^{v_i+v_j}{\sqrt{l_{i_0}}}. 
\label{eq:def-kappa}
\end{align}
The rescaling with the factor $l_{i_0}^{-1/2}$, which $P_{i_0}$-a.s.\ converges 
to $0$ as $\sigma\to\infty$, makes the sources $\pm 1$ of the current
$k$ in the vertices $i_0$ and $Z_\sigma$ 
asymptotically negligible. This explains intuitively why 
the homogeneous Kirchhoff rules \eqref{eq:Kirchhoff-kappa}
rather than the inhomogeneous Kirchhoff rules \eqref{eq:Kirchhoff} apply 
asymptotically to $\kappa$.

For a truncation parameter $M>0$, we consider the events 
\begin{align}
\label{eq:def-event-B-single}
&B_\sigma(M)=\{ |\kappa_{ij}|,|v_i|\le M, l_i>0  
\text{ for all }i,j\in V\}.
\end{align}

\paragraph{Notation for error terms.}
We write $f(\sigma)=O_M(g(\sigma))$ as $\sigma\to\infty$ if there exists a 
constant 
$c(M)>0$ depending on the parameter $M$ such that 
$|f(\sigma)|\le c(M)|g(\sigma)|$ for all $\sigma$ large enough. 
If we use $O$ with more than one subscript, the constant may depend on all
subscripts.

The following theorem connects the density $\rho_{i_0}^\single$
defined in \eqref{eq:def-rho-single} to the asymptotics of VRJP. 
Recall that for $\vec T\in\vec\T_{i_1}$, its undirected version  
is denoted by $T\in\T$.

\begin{theorem}{\bf (Limiting joint density)}
\label{thm:claim-thm-asymptotics-collected-single}
Let $i_0,i_1\in V$, $k\in\K_{i_0,i_1}^+$, $\sigma>0$, 
$A\subseteq\cL_\sigma$ be measurable, and $\vec T\in\vec\T_{i_1}$.
For $M>0$, on the events $B_\sigma(M)$, one has the following 
in the limit as $\sigma\to\infty$ with $l_{i_0}=\sigma/\sum_{i\in V}e^{2v_i}$:
\begin{align}
 & P_{i_0}(k(\sigma)=k,l(\sigma)\in A,\vec T^\lastexit(0,\sigma)=\vec T)\nonumber\\
= &\left(1+O_{M,W,G}\left(\sigma^{-1/2}\right)\right)
\int_A l_{i_0}^{\frac{|V|-1}{2}-|E|}\rho^\single_{i_0}(\kappa,v,i_1,T) \prod_{i\in V\setminus\{i_0\}}dv_i
\label{eq:claim-thm-asymptotics-collected-single}
\end{align}
\end{theorem}

This theorem is a main ingredient to prove the 
following weak convergence result. Let 
\begin{align}
\label{eq:def-xi-single}
\xi_\sigma= & (k(\sigma),l(\sigma),Z_\sigma,T^\lastexit(0,\sigma)), \\
\label{eq:def-event-Q-i0}
\cQ_{\sigma,i_0}= & \bigcup_{i_1\in V}\K_{i_0,i_1}\times\cL_\sigma\times\{i_1\}\times\T_{i_1}. 
\end{align}
Thus, $\xi_\sigma\in\cQ_{\sigma,i_0}$ means that all $l_i$ are positive and 
$T^\lastexit(0,\sigma)$ is a spanning tree.

\begin{theorem}{\bf (Weak convergence)}
\label{cor:weak-convergence-single-time}
The joint sub-proba\-bi\-li\-ty distribution of
\begin{align}
& (\kappa(\sigma),v(\sigma),Z_\sigma,T^\lastexit(0,\sigma))
\end{align}
with respect to $P_{i_0}(\cdot\cap\{\xi_\sigma\in\cQ_{\sigma,i_0}\})$ converges 
weakly as $\sigma\to\infty$ to the probability measure 
$\mu^\single_{i_0}$ defined in \eqref{eq:def-mu-single}. 
\end{theorem}

\subsection{Double timescale -- extended $H^{2|2}$ model as limit of VRJP}

In this section, we generalize the results from 
Section \ref{sec:single-time-scale} to two timescales $\sigma,\sigma'>0$
to retrieve the $\htwo$ model as a marginal of $\mu_{i_0}^\bigsusy$. 
First, let us introduce double timescale versions of the quantities and 
sets considered in Section \ref{sec:single-time-scale}. 

We set $l'(\sigma,\sigma')=(l_i'(\sigma,\sigma'))_{i\in V}$ with 
\begin{align}
l_i'(\sigma,\sigma')=l_i(\sigma+\sigma')-l_i(\sigma).
\end{align}
This is the local time the process $(Z_\sigma)_{\sigma\ge 0}$ spends in vertex
$i$ during the time interval $[\sigma,\sigma+\sigma']$. Using the 
definition \eqref{eq:def-L} of $\cL_\sigma$, we set 
$\cL_{\sigma,\sigma'}=\cL_\sigma\times\cL_{\sigma'}$.

For the time interval of length $\sigma'$ starting at $\sigma$, 
define $k'(\sigma,\sigma')=(k_{ij}'(\sigma,\sigma'))_{(i,j)\in\vec{E}}$ by 
\begin{align}
k_{ij}'(\sigma,\sigma')=k_{ij}(\sigma+\sigma')-k_{ij}(\sigma). 
\end{align}
In other words, $k_{ij}'(\sigma,\sigma')$ equals the number of crossings 
from $i$ to $j$ in the time interval $[\sigma,\sigma+\sigma']$. 
For given $i_0,i_1,i_1'\in V$, let 
\begin{align}
\label{eq:def-K}
\K_{i_0,i_1,i_1'}=\K_{i_0,i_1}\times\K_{i_1,i_1'}\quad\text{and}\quad
\K_{i_0,i_1,i_1'}^+=\K_{i_0,i_1,i_1'}\cap(\N^{\vec E}\times\N^{\vec E}). 
\end{align}
Thus, $\K_{i_0,i_1,i_1'}^+$ is obtained by a restriction to strictly positive integers. 
Note that one has $(k(\sigma),k'(\sigma,\sigma'))\in\K_{i_0,Z_\sigma,Z_{\sigma+\sigma'}}$. 

The following definition introduces some events which are useful 
to study the joint law of the random variable 
\begin{align}
\label{eq:def-xi}
\xi_{\sigma,\sigma'}=(k(\sigma),k'(\sigma,\sigma'),l(\sigma),l'(\sigma,\sigma'),
Z_\sigma,Z_{\sigma+\sigma'},T^\lastexit(0,\sigma),T^\lastexit(\sigma,\sigma+\sigma')).
\end{align}

\begin{definition}[Events concerning local times and last exit trees]
\label{def:events}
Let $i_0,i_1,i_1'\in V$, $(k,k')\in\K_{i_0,i_1,i_1'}^+$, $\sigma,\sigma'>0$, 
$A\subseteq\cL_{\sigma,\sigma'}$ be measurable, $\vec T\in\vec\T_{i_1}$, and 
$\vec T'\in\vec\T_{i_1'}$. 
In this setup, we define the following events
\begin{align}
K_{k,\sigma,k',\sigma'}= & \{ k(\sigma)=k,k'(\sigma,\sigma')=k'\}, \\
\label{eq:def-event-L}
L_{\sigma,\sigma'}(A)= & \{ (l(\sigma),l'(\sigma,\sigma')) \in A\}, \\
E_{i_1,\vec T,\sigma,i_1',\vec T',\sigma'}=&
\{\vec T^\lastexit(0,\sigma)=\vec T,\vec T^\lastexit(\sigma,\sigma+\sigma')=\vec T'\}.
\end{align}
\end{definition}
The following theorem describes explicitly the distribution of the random variable
$\xi_{\sigma,\sigma'}$. 

\begin{theorem}{\bf (Joint density of edge crossings, local times, and 
last exit trees)}
\label{thm:proba-of-a-path}
In the setup of Definition \ref{def:events}, the following holds
with $\pp$ defined in \eqref{eq:def-pp}
\begin{align}
&P_{i_0}(K_{k,\sigma,k',\sigma'}\cap L_{\sigma,\sigma'}(A)\cap E_{i_1,\vec T,\sigma,i_1',\vec T',\sigma'})\cr
= & \int_A
\exp\Big(
\sum_{\{i,j\}\in E} W_{ij}\left(1-\sqrt{1+l_i+l_i'}\sqrt{1+l_j+l_j'}\right)\Big)
\prod_{i\in V\setminus\{i_1'\}}\frac{1}{\sqrt{1+l_i+l_i'}} \cr
& \cdot \pp(k,l,\vec T) \pp(k',l',\vec T')
\prod_{i\in V\setminus\{i_0\}}dl_idl_i'.
\label{eq:density-path}
\end{align}
\end{theorem}

Recall that the motivation for taking two different time scales 
$1\ll\sigma\ll\sigma'$ was to study fluctuations of local times. 
In that view, it is natural to take the limit $\sigma'\to\infty$
first and only second the limit $\sigma\to\infty$. More generally, it turns out
that we can also take $\sigma$ and $\sigma'$ simultaneously to infinity, 
as long as $\min\{\sigma,\sigma'\sigma^{-2}\}\to\infty$. 
We will abbreviate henceforth
\begin{eqnarray}
k'=k(\sigma,\sigma')\quad\text{ and }\quad l'=l(\sigma,\sigma').
\end{eqnarray}

\paragraph{Rescaling of local times and their fluctuations.}
The considerations around \eqref{eq:u-as-limit-for-vrjp} motivate us to study
the cross-ratio 
\begin{align}
\label{eq:cross-ratio}
\frac{l_i/l_{i_0}}{l_i'/l_{i_0}'}. 
\end{align}
In order not to divide by $0$, given $\sigma,\sigma'>0$, we consider the event 
$\{l_i>0,\; l_i'>0\text{ for all }i\in V\}=\{(l,l')\in\cL_{\sigma,\sigma'}\}$.
On this event, in analogy to $v_i$ defined in \eqref{eq:def-v}, 
we introduce new variables $u_i=u_i(\sigma,\sigma')$ for $i\in V$ by 
\begin{align}
\label{eq:def-u-v}
l_i'=l_{i_0}' e^{2u_i}. 
\end{align}
Although $v_i$ and $u_i$ are certainly different random variables, formula
\eqref{eq:u-as-limit-for-vrjp} shows us that they coincide $P_{i_0}$-almost
surely asymptotically in the limit as $\sigma'\gg\sigma\to\infty$. 
Not very unexpectedly for fluctuations, the right scale for the logarithm 
of the cross-ratio \eqref{eq:cross-ratio} turns out to be $\sqrt{l_{i_0}}$, 
i.e.\ roughly the square root of the smaller time scale. This motivates
us to define 
\begin{align}
\label{eq:def-s}
s_i=s_i(\sigma,\sigma')
=-\frac12\sqrt{l_{i_0}}\log\frac{l_i/l_{i_0}}{l_i'/l_{i_0}'}
=\sqrt{l_{i_0}}(u_i-v_i), \quad i\in V.
\end{align}
Note that $s_{i_0}=u_{i_0}=0$. Hence, $(s_i)_{i\in V},(u_i)_{i\in V}\in\Omega_{i_0}$.
An interpretation of $s_i$ is most easily described in the special case of 
taking the limit $\sigma'\to\infty$ first and only then $\sigma\to\infty$. 
In this case, $\lim_{\sigma'\to\infty}s_i(\sigma,\sigma')$ describes the 
fluctuations of $l_i(\sigma)/l_{i_0}(\sigma)$ around 
$\lim_{\sigma'\to\infty}l_i(\sigma')/l_{i_0}(\sigma')$ on the appropriate scale. 

Analogously to \eqref{eq:def-kappa}, we introduce, again on the event 
$\{(l,l')\in\cL_{\sigma,\sigma'}\}$, the rescaled 
crossing numbers 
$\kappa_{ij}'=\kappa_{ij}'(\sigma,\sigma')$ for $(i,j)\in\vec E$ by 
\begin{align}
& \kappa_{ij}'=\frac{k_{ij}'-\frac12W_{ij}\sqrt{l_i'l_j'}}{\sqrt{l_{i_0}'}}
=\frac{k_{ij}'}{\sqrt{l_{i_0}'}}-\frac{W_{ij}}{2}e^{u_i+u_j}{\sqrt{l_{i_0}'}}.
\label{eq:def-kappa-prime}
\end{align}

For a truncation parameter $M>0$, we consider the events 
\begin{align}
\label{eq:def-event-B}
& B_{\sigma,\sigma'}(M)=\{ |\kappa_{ij}|,|\kappa_{ij}'|,|s_i|,|u_i|,|v_i|\le M
\text{ and }l_i>0,l_i'>0
\text{ for all }i,j\in V\}.
\end{align}
The random variable $\xi_{\sigma,\sigma'}$ defined in \eqref{eq:def-xi}
is only interesting on the event $\{\xi_{\sigma,\sigma'}\in\cO_{\sigma,\sigma',i_0}\}$ with 
\begin{align}
\label{eq:def-event-O-i0}
\cO_{\sigma,\sigma',i_0}=\bigcup_{i_1,i_1'\in V}\K_{i_0,i_1,i_1'}
\times\cL_{\sigma,\sigma'}\times\{i_1\}\times\{i_1'\}\times\T_{i_1}\times\T_{i_1'}. 
\end{align}
One has $\xi_{\sigma,\sigma'}\not\in\cO_{\sigma,\sigma',i_0}$ if some $l_i$ or $l_i'$ equals $0$ or if 
$T^\lastexit(0,\sigma)$ or $T^\lastexit(\sigma,\sigma+\sigma')$ is not spanning. 
Furthermore, we consider the map 
\begin{align}
\label{eq:def-F-sigma-sigma'-i0}
& F_{\sigma,\sigma',i_0}:\cO_{\sigma,\sigma',i_0}\to (\R^{\vec E})^2\times\Omega_{i_0}^3\times V^2\times\T^2  ,\cr
& F_{\sigma,\sigma',i_0}(k,k',l,l',i_1,i_1',T,T')
= (\kappa,\kappa',s,v,u,i_1,i_1',T,T')
\end{align}
defined by the equations  \eqref{eq:def-v}, \eqref{eq:def-kappa}, and 
\eqref{eq:def-u-v}--\eqref{eq:def-kappa-prime}. 

The following main theorem shows that the extended $\htwo$ model
describes the asymptotics of the time-changed version $Z$ of the 
vertex-reinforced jump process. To be more precise, 
it occurs as the joint limit of the rescaled crossing numbers, the rescaled 
fluctuations of local times, the asymptotic local times on a 
logarithmic scale, the endpoints of paths, and last exit trees as follows. 
Let $E_{i_0}$ denote the expectation with respect to $P_{i_0}$.

\begin{theorem}{\bf (Weak convergence to the extended $\htwo$ model)}
\label{thm:weak-convergence}
The joint sub-probability distribution of
\begin{align}
\label{eq:vector-kappa-etc}
& (\kappa(\sigma),\kappa'(\sigma,\sigma'),s(\sigma,\sigma'),v(\sigma), 
u(\sigma,\sigma'),
Z_\sigma,Z_{\sigma+\sigma'},T^\lastexit(0,\sigma),T^\lastexit(\sigma,\sigma+\sigma'))
\end{align}
with respect to $P_{i_0}(\cdot\cap\{\xi_{\sigma,\sigma'}\in\cO_{\sigma,\sigma',i_0}\})$ converges 
weakly as $\min\{\sigma,\sigma'\sigma^{-2}\}\to\infty$ to $\mu^\bigsusy_{i_0}$. 
In other words, for any bounded continuous test function 
$f:(\R^{\vec E})^2\times\Omega_{i_0}^3\times V^2\times\T^2\to\R$, one has 
\begin{align}
\lim_{\min\{\sigma,\sigma'\sigma^{-2}\}\to\infty}
E_{i_0}\left[f(F_{\sigma,\sigma',i_0}(\xi_{\sigma,\sigma'})),\xi_{\sigma,\sigma'}\in\cO_{\sigma,\sigma',i_0}\right]
=\int\limits_{\cH^2\times\Omega_{i_0}^3\times V^2\times\T^2}
f\, d\mu^\bigsusy_{i_0}.
\end{align}
In particular, $P_{i_0}(\xi_{\sigma,\sigma'}\in\cO_{\sigma,\sigma',i_0})\to 1$ 
as $\min\{\sigma,\sigma'\sigma^{-2}\}\to\infty$.
\end{theorem}

\paragraph{How this paper is organized.}
In Section~\ref{sec:proof-single} we prove the single timescale results 
stated in Section~\ref{sec:single-time-scale}, which could in fact be deduced
from the double timescale analysis in Section~\ref{se:double-timescale}.  
However, for the convenience of the reader, we present first the easier argument
for a single timescale before we prove the more general results for two timescales.
In Section~\ref{sec:21}, 
we prove Theorem \ref{thm:proba-of-a-path-single} on the density of edge 
crossings, local times, and last exit tree, using path counting arguments and 
calculating volume factors. In Section~\ref{sec:22}, 
we state Lemma \ref{le:first-asymptotics-single} which gives the asymptotics of 
$\pp(k,l,\vec T)$. It is proved in Appendix~\ref{appendix:combinatorial-factors}. 
From this we deduce Lemma \ref{le:second-asymptotics-single}
giving the asymptotic density of a path. In Section~\ref{sec:23} we prove Theorem 
\ref{thm:claim-thm-asymptotics-collected-single} giving the limiting joint density
of currents, local times, and last exit tree. Section~\ref{sec:24}
proves the \emph{vague} convergence  of 
$(\kappa(\sigma),v(\sigma),Z_\sigma,T^\lastexit(0,\sigma))$
rather than \emph{weak} convergence. This involves a continuum limit. 
Finally, in Section~\ref{sec:25} we prove the 
weak convergence theorem \ref{cor:weak-convergence-single-time}.
This requires a Gaussian integral over currents on the graph which is 
stated in Lemma \ref{le:integral-kappa-single} of that section and proved in 
appendix \ref{app:B}. Another key ingredient for the weak convergence result
is the normalization \eqref{eq:normalizing-const} of 
$\mu_{i_0}^\susy$. It is reviewed in Appendix~\ref{sec:app-review-results}. 

Section~\ref{se:double-timescale} deals with the double timescale results. 
Section~\ref{se:joint-density-local-times} contains a proof of 
Theorem \ref{thm:proba-of-a-path} giving the density of the random 
variable $\xi_{\sigma,\sigma'}$ for fixed times $\sigma,\sigma'$. It is in the spirit 
of the proof of Theorem \ref{thm:proba-of-a-path-single}.
In Section~\ref{se:asymptotics}, we derive the asymptotics of this 
density, appropriately rescaled, in the limit 
$\min\{\sigma,\sigma'\sigma^{-2}\}\to\infty$. This yields a proof of 
Theorem \ref{thm:claim-thm-asymptotics-collected} which states a double 
timescale version similar to Theorem 
\ref{thm:claim-thm-asymptotics-collected-single}.
In Section~\ref{se:continuum-limit} we show the two timescale variant of 
vague convergence. In Section~\ref{se:marginals}, we deduce Theorem 
\ref{thm:weak-convergence} giving weak convergence. The key ingredients here 
are on the one hand again the normalization \eqref{eq:normalizing-const} and 
on the other hand the fact, stated in Theorem 
\ref{thm:marginal-rho-susy}, that the extended $\htwo$ 
model $\mu_{i_0}^\bigsusy$ has $\mu_{i_0}^\susy$ as a marginal.

\section{Proof for a single timescale}
\label{sec:proof-single}

\subsection{Proof of Theorem \ref{thm:proba-of-a-path-single}}
\label{sec:21}

For $0<\sigma_1<\sigma_2$, let $\discrete(Z_{[\sigma_1,\sigma_2]})$ 
denote the path in discrete time obtained from $(Z_\sigma)_{\sigma\in[\sigma_1,\sigma_2]}$
by taking only the values immediately before the jumps. 
For $i_0,i_1\in V$, $k\in\K^+_{i_0,i_1}$, and $\vec T\in\vec\T_{i_1}$ let 
$\Pi_{i_0,i_1}(k,\vec T)$ denote the set of finite paths in discrete time which 
start in $i_0$, end in $i_1$, cross every $(i,j)\in\vec E$ precisely $k_{ij}$
times and have last exit tree $\vec T$. 

Let $\sigma$ be fixed. We derive the joint density of 
$(k(\sigma),l(\sigma),\vec T^\lastexit(0,\sigma))$ using combinatorial arguments.
The density of paths for VRJP was first provided in the proof of Theorem 3 
in \cite{sabot-tarres2016}. As it differs slightly in notation and time scaling
from the present paper, we explain the connection to the following formula 
\eqref{eq:prob-density-fixed-path-single} in Appendix~\ref{sec:app-review-results}, 
cf.\ formula \eqref{eq:density-s-t-2}. 
For any path $\pi\in\Pi_{i_0,i_1}(k,\vec T)$ and any 
measurable $A\subseteq\cL_\sigma$ we obtain with an appropriate volume factor 
$\V(k,l,i_1)$ specified in \eqref{eq:volume-factor-k-single} below: 
\begin{align}
\label{eq:prob-density-fixed-path-single}
& P_{i_0}(\discrete(Z_{[0,\sigma]})=\pi,l(\sigma)\in A)
= \int_A\exp\Big(
\sum_{\{i,j\}\in E} W_{ij}\Big(1-\sqrt{1+l_i}\sqrt{1+l_j}\Big)\Big)\cr
& \cdot \prod_{i\in V\setminus\{i_1\}}\frac{1}{\sqrt{1+l_i}} 
\prod_{(i,j)\in\vec{E}}\left(\frac{W_{ij}}{2}\right)^{k_{ij}}
\cdot\V(k,l,i_1)\prod_{i\in V\setminus\{i_0\}} dl_i. 
\end{align}
Note that the right hand side in the last equation depends only on the 
choice of $k$, $i_0$, and $i_1$, but neither on $\vec T$ nor 
on the choice of $\pi\in\Pi_{i_0,i_1}(k,\vec T)$. Consequently, 
\begin{align}
& P_{i_0}(k(\sigma)=k,l(\sigma)\in A,\vec T^\lastexit(0,\sigma)=\vec T)\cr
= & |\Pi_{i_0,i_1}(k,\vec T)|
\int_A\exp\Big(
\sum_{\{i,j\}\in E} W_{ij}\Big(1-\sqrt{1+l_i}\sqrt{1+l_j}\Big)\Big)\cr
& \cdot 
\prod_{i\in V\setminus\{i_1\}}\frac{1}{\sqrt{1+l_i}} 
\prod_{(i,j)\in\vec{E}}\left(\frac{W_{ij}}{2}\right)^{k_{ij}}
\cdot\V(k,l,i_1)\prod_{i\in V\setminus\{i_0\}} dl_i.
\label{eq:proba-under-consideration-single}
\end{align}
The volume factor $\V(k,l,i_1)$ consists of a product
of contributions from each vertex. We determine it as follows. 
For $i\in V$, we let 
\begin{align}
\label{eq:def-k-i}
k_i=\sum_{\substack{j\in V:\\ \{i,j\}\in E}}k_{ij}
\end{align}
be the number of departures from vertex $i$. 
Given the directed edge crossings $k$, to have for all vertices $i\in V$ local 
time $l_i$ at vertex $i$ at time $\sigma$ we need jump times 
$0=t_0^{(i)}<t_1^{(i)}<\cdots<t_{k_i}^{(i)}\le l_i$, where for all $i\in V\setminus\{i_1\}$ 
we have moreover $t_{k_i}^{(i)}=l_i$. For $i\neq i_1$, these are $k_i-1$ jumps in the time 
interval $(0,l_i)$. Integrating over $t_1^{(i)},\ldots,t_{k_i-1}^{(i)}$ gives the volume
factor contribution from vertex $i\neq i_1$ 
\begin{align}
\cV(k_i-1,l_i) = 
\lambda^{k_i-1}\big(\big\{
(t_1^{(i)},\ldots,t_{k_i-1}^{(i)})\in(0,l_i)^{k_i-1}:\;t_1^{(i)}<\cdots<t_{k_i-1}^{(i)}
\big\}\big)
=\frac{l_i^{k_i-1}}{(k_i-1)!}; 
\end{align}
here $\lambda^{k_i-1}$ denotes the Lebesgue measure on $\R^{k_i-1}$. 
For $i=i_1$, given $l_{i_1}$, there is one degree of freedom more. Integrating over 
the jump times $t_1^{(i_1)},\ldots,t_{k_{i_1}}^{(i_1)}$ gives the volume
factor 
\begin{align}
\cV(k_{i_1},l_{i_1})=
\lambda^{k_{i_1}}\big(\big\{
(t_1^{(i_1)},\ldots,t_{k_{i_1}}^{(i_1)})\in(0,l_{i_1})^{k_{i_1}}:\;
t_1^{(i_1)}<\cdots<t_{k_{i_1}}^{(i_1)}\big\}\big)
=\frac{l_{i_1}^{k_{i_1}}}{k_{i_1}!}.
\label{eq:Vk}
\end{align}
Altogether this yields the volume factor 
\begin{align}
\label{eq:volume-factor-k-single}
\V(k,l,i_1)=\cV(k_{i_1},l_{i_1})\prod_{i\in V\setminus\{i_1\}}\cV(k_i-1,l_i)=
\frac{l_{i_1}^{k_{i_1}}}{k_{i_1}!}\prod_{i\in V\setminus\{i_1\}} \frac{l_i^{k_i-1}}{(k_i-1)!}. 
\end{align}
The cardinality of $\Pi_{i_0,i_1}(k,\vec T)$ is counted in \cite{Keane-Rolles2000};
it is reviewed in Fact \ref{eq:fact-counting-paths} in the appendix. 
Combining this with 
\eqref{eq:volume-factor-k-single} and using that $\vec T$ is a spanning tree 
directed towards $i_1$, we obtain 
\begin{align}
\label{eq:factor-kl-single}
|\Pi_{i_0,i_1}(k,\vec T)| \V(k,l,i_1)
=\frac{\prod_{(i,j)\in \vec T}k_{ij}}{\prod_{(i,j)\in\vec{E}}k_{ij}!}
\frac{\prod_{i\in V}l_i^{k_i}}{\prod_{i\in V\setminus\{i_1\}} l_i}
=\prod_{(i,j)\in\vec{E}}\frac{l_i^{k_{ij}}}{k_{ij}!}
\prod_{(i,j)\in \vec T}\frac{k_{ij}}{l_i}.
\end{align}
The claim follows from \eqref{eq:proba-under-consideration-single} and 
\eqref{eq:factor-kl-single}.

\subsection{Asymptotics of the density of a path}
\label{sec:22}

In this section, we use Taylor arguments and Stirling's formula to 
asymptotically describe the density of the random variables
$(k(\sigma),l(\sigma),\vec T^\lastexit(0,\sigma))$ 
but rewritten in terms of rescaled variables. 
Recall the definition \eqref{eq:def-omega} of $\omega_{ij}$.

\paragraph{Scales of the variables.} 
Recall from \eqref{eq:def-kappa} and \eqref{eq:def-v} the following
relations:
\begin{align}
\label{eq:kij-in-terms-of-omega-single}
k_{ij}=l_{i_0}\omega_{ij}+\sqrt{l_{i_0}}\kappa_{ij}, \qquad
\frac{\sigma}{l_{i_0}}=\sum_{i\in V} e^{2v_i}.
\end{align} 
In particular, $\sigma$ and $l_{i_0}$ live on the same scale when all $v_i$ are  
bounded. Thus, on the event $B_\sigma(M)$ defined in 
\eqref{eq:def-event-B-single}, 
all $l_i$ and all $k_{ij}$ have the same order of magnitude as~$\sigma$.

\begin{lemma}{\bf (Asymptotics of the combinatorial factors)}
\label{le:first-asymptotics-single}
Given $M>0$, on the event $B_\sigma(M)$, one has the following 
asymptotics 
for $\pp$ defined in \eqref{eq:def-pp} as $\sigma\to\infty$:
\begin{align}
\pp(k,l,\vec T)
= & \frac{1}{(2\pi l_{i_0})^{|E|}} 
\exp\Big(l_{i_0}\sum_{(i,j)\in\vec E}\omega_{ij}-v_{i_1}
-\sum_{(i,j)\in\vec E}\frac{\kappa_{ij}^2}{2\omega_{ij}}\Big) \nonumber\\
& \cdot\prod_{i\in V\setminus\{i_1\}}e^{-2v_i}
\prod_{(i,j)\in\vec{E}\setminus\vec T}\frac{1}{\omega_{ij}}
\prod_{\{i,j\}\in E}\omega_{ij}\cdot(1+O_{M,W,G}(\sigma^{-1/2})).
\label{eq:lemma-prod-Wl-hoch-k-single}
\end{align}
\end{lemma}

The proof relies on Taylor expansions and Stirling's formula.
It is given in appendix~\ref{appendix:combinatorial-factors}.

Next, we study the asymptotic behavior of the remaining part of the density 
given in \eqref{eq:density-path-single}.

\begin{lemma}{\bf (Asymptotics of the density of a path)}
\label{le:second-asymptotics-single}
For $M>0$, on the events $B_\sigma(M)$, one has the following 
in the limit as $\sigma\to\infty$:
\begin{align}
& \exp\Big(
\sum_{\{i,j\}\in E} W_{ij}\left(1-\sqrt{1+l_i}\sqrt{1+l_j}\right)\Big)
\prod_{i\in V\setminus\{i_1\}}\frac{1}{\sqrt{1+l_i}} 
= \left(1+O_{M,W,G}\left(\sigma^{-1}\right)\right)\nonumber\\
& \cdot (l_{i_0})^{-\frac{|V|-1}{2}} \exp\Big(\sum_{\{i,j\}\in E} W_{ij}\left(
1-\cosh(v_i-v_j)\right)\Big) 
\prod_{(i,j)\in\vec{E}}\exp(-l_{i_0}\omega_{ij})
\prod_{i\in V\setminus\{i_1\}}e^{-v_i} .
\label{eq:2nd-asymptotics-single}
\end{align}
\end{lemma}
\begin{proof}
During the proof, we work on the events $B_\sigma(M)$ in the limit
$\sigma\to\infty$. Using the Taylor expansion of $(1+l_i)^{1/2}$
and the representation \eqref{eq:def-v} of $l_i,l_j$, we obtain  
\begin{align}
\sqrt{1+l_i}\sqrt{1+l_j}
= & \sqrt{l_il_j}+\frac12\left(\sqrt{\frac{l_i}{l_j}}
+\sqrt{\frac{l_j}{l_i}}\right)+O_M\left(\sigma^{-1}\right)
\nonumber\\
=&\sqrt{l_il_j}+\cosh(v_i-v_j)+O_M\left(\sigma^{-1}\right), 
\label{eq:prod-sqrt-l}\\
\label{eq:prod-omega}
\prod_{(i,j)\in\vec{E}}\exp(l_{i_0}\omega_{ij}) 
= &\prod_{\{i,j\}\in E}\exp(2l_{i_0}\omega_{ij}) 
= \exp\Big(\sum_{\{i,j\}\in E}W_{ij}\sqrt{l_il_j}\Big). 
\end{align}
Consequently, we deduce 
\begin{align}
& \exp\Big(
-\sum_{\{i,j\}\in E} W_{ij}\sqrt{1+l_i}\sqrt{1+l_j}\Big)
\prod_{(i,j)\in\vec{E}}\exp(l_{i_0}\omega_{ij}) \nonumber\\
= & \exp\Big(-\sum_{\{i,j\}\in E} W_{ij}\cosh(v_i-v_j)\Big)
\left(1+O_{M,W,G}\left(\sigma^{-1}\right)\!\right).
\label{eq:exp-sqrt-l-l2}
\end{align}
We conclude by the observation that 
\begin{align}
\prod_{i\in V\setminus\{i_1\}}\frac{1}{\sqrt{1+l_i}} 
=\prod_{i\in V\setminus\{i_1\}}\frac{1}{\sqrt{l_i}(1+O_M(\sigma^{-1}))} 
= &  \frac{1+O_{M,G}(\sigma^{-1})}{l_{i_0}^{\frac{|V|-1}{2}}}
\prod_{i\in V\setminus\{i_1\}}e^{-v_i}.
\end{align}
\end{proof}

\subsection{Proof of Theorem \ref{thm:claim-thm-asymptotics-collected-single}}
\label{sec:23}

Combining Theorem \ref{thm:proba-of-a-path-single}, Lemmas 
\ref{le:first-asymptotics-single} and \ref{le:second-asymptotics-single}, 
we obtain 
\begin{align}
& P_{i_0}(k(\sigma)=k,l(\sigma)\in A,\vec T^\lastexit(0,\sigma)=\vec T)
\nonumber\\
=& (2\pi)^{-|E|}\int_A l_{i_0}^{-|E|-\frac{|V|-1}{2}}
\exp\Big(\sum_{\{i,j\}\in E} W_{ij}\left(1-\cosh(v_i-v_j)\right)
-v_{i_1}
-\sum_{(i,j)\in\vec E}\frac{\kappa_{ij}^2}{2\omega_{ij}}\Big) \nonumber\\
& \cdot\prod_{i\in V\setminus\{i_1\}}e^{-3v_i}
\prod_{(i,j)\in\vec{E}\setminus\vec T}\frac{1}{\omega_{ij}}
\prod_{\{i,j\}\in E}\omega_{ij}\cdot(1+O_{M,W,G}(\sigma^{-1/2}))\,
\prod_{i\in V\setminus\{i_0\}} dl_i. 
\label{eq:proba-k-l-vec-T}
\end{align}
The symmetry $\omega_{ij}=\omega_{ji}$ yields the following 
formula, which connects products indexed by directed edges with products 
indexed by undirected edges:
\begin{align}
& \prod_{(i,j)\in\vec{E}\setminus\vec T}\frac{1}{\omega_{ij}}
\prod_{\{i,j\}\in E}\omega_{ij}
= \prod_{\{i,j\}\in T}\omega_{ij}\prod_{\{i,j\}\in E}\frac{1}{\omega_{ij}}
=2^{|E|-|V|+1}\prod_{\{i,j\}\in E\setminus T}\frac{1}{W_{ij}e^{v_i+v_j}}.
\label{eq:prod-trees-single}
\end{align}
Therefore, using the definition \eqref{eq:def-rho-single} of $\rho_{i_0}^\single$, 
the right hand side of \eqref{eq:proba-k-l-vec-T} is given by 
\begin{align}
2^{1-|V|}\int_A l_{i_0}^{-|E|-\frac{|V|-1}{2}} \sum_{j\in V}e^{2v_j}\prod_{i\in V}e^{-2v_i}
\cdot(1+O_{M,W,G}(\sigma^{-1/2}))\rho_{i_0}^\single\prod_{i\in V\setminus\{i_0\}} dl_i.
\end{align} 
We conclude the proof by transforming $l$-variables to $v$-variables.
Using $v_{i_0}=0$, 
\begin{align}
\frac{\partial v_i}{\partial l_j}
=\frac{1}{2l_i}\left(\delta_{ij}+\frac{l_i}{l_{i_0}}\right),\quad
i,j\in V\setminus\{i_0\}, 
\label{eq:dv-dl}
\end{align}
and $\det(\Id+A)=1+\operatorname{tr}(A)$ for rank 1 matrices $A$, 
we obtain the Jacobi determinant 
\begin{align}
\left|\det \frac{\partial v}{\partial l}\right|
=&\Big(1+\sum_{i\in V\setminus\{i_0\}}\frac{l_i}{l_{i_0}}\Big)
\prod_{i\in V\setminus\{i_0\}}\frac{1}{2l_i}
=\frac{\sigma}{l_{i_0}}\prod_{i\in V\setminus\{i_0\}}\frac{1}{2l_i}
= (2l_{i_0})^{1-|V|}\sum_{j\in V}e^{2v_j}\prod_{i\in V}e^{-2v_i} . 
\label{eq:det-dv-dl}
\end{align}

\subsection{Proof of vague convergence}
\label{sec:24}

\begin{lemma}
\label{le:vague-conv-single}
For any continuous compactly supported function 
$f:\R^{\vec E}\times\Omega_{i_0}\times V\times\T\to\R$, one has 
\begin{align}
\label{eq:vague-singletime}
\lim_{\sigma\to\infty}
E_{i_0}\left[f(\kappa(\sigma),v(\sigma),Z_\sigma,T^\lastexit(0,\sigma)),
\xi_\sigma\in\cQ_{\sigma,i_0}\right]
=\int\limits_{\cH\times\Omega_{i_0}\times V\times\T}
f\, d\mu^\single_{i_0}.
\end{align}
\end{lemma}
\begin{proof}
Recall that $l_i=l_i(v,\sigma)=\sigma e^{2v_i}/\sum_{j\in V}e^{2v_j}$. Hence, 
$\kappa=\kappa(k,v,\sigma)$ is a function of $k$, $v$, and $\sigma$, cf.\ 
\eqref{eq:def-kappa}.
Because $f$ is compactly supported, we can choose a constant $M>0$ such 
that for any $k\in\K_{i_0,i_1}$ and $l\in\cL_\sigma$ with 
$(\kappa,v,i_1,T)\in\supp f$, 
all the components of $\kappa(k,l)$ and $v(l)$ are bounded in absolute value
by $M$. Moreover, all components 
of $l/\sigma$ are bounded away from $0$, and bounded above by $1$. 
For $\{i,j\}\in E$, the facts 
$\omega_{ij}\ge\frac{W_{ij}}{2}e^{-2M}>0$ and 
$|\kappa_{ij}|\le M$ together with \eqref{eq:kij-in-terms-of-omega-single} 
imply $k_{ij}>0$ for $\sigma$ large enough, and thus $k\in\K_{i_0,i_1}^+$. 
Note that $\vec T^\lastexit(0,\sigma)=\vec T\in\vec\T_{i_1}$ is equivalent 
to $T^\lastexit(0,\sigma)=T$ and $Z_\sigma=i_1$. 
Hence, Theorem \ref{thm:claim-thm-asymptotics-collected-single} yields 
that the l.h.s.\ of \eqref{eq:vague-singletime} in the limit as 
$\sigma\to\infty$ equals 
\begin{align}
\label{eq:first-riemann-sum}
& \left(1+O_{M,W,G}\left(\sigma^{-1/2}\right)\right) 
\sum_{\substack{i_1\in V,\\ T\in\T}}\int_{\Omega_{i_0}}
l_{i_0}^{\frac{|V|-1}{2}-|E|}\sum_{k\in\K_{i_0,i_1}}
(f\rho_{i_0}^\single)(\kappa,v,i_1,T) \prod_{i\in V\setminus\{i_0\}} dv_i. 
\end{align}
Fix a path from $i_0$ to $i_1$. Let $\pi=(\pi_{ij})_{(i,j)\in\vec E}$
be the corresponding edge crossing numbers. 
We introduce the shift vector 
$\vartheta_l=(l_{i_0}^{1/2}\omega_{ij})_{(i,j)\in\vec E}\in\cH$. Let 
$\vec T_0$ be an arbitrary directed spanning tree of $G$. 
Let $\Gamma\subset\cH$ denote the lattice which has $\Z^{\vec E\setminus\vec T_0}$
as its image under the restriction map $\R^{\vec E}\to\R^{\vec E\setminus\vec T_0}$.
When $k$ runs over $\K_{i_0,i_1}$, the corresponding 
$\kappa-l_{i_0}^{-1/2}\pi$ runs over the shifted lattice
$l_{i_0}^{-1/2}\Gamma-\vartheta_l$.
In other words, for any $v\in\Omega_{i_0}$ and $T\in\T$, one has 
\begin{align}
\label{eq:riemann-sum-f-circ-F-single}
& \sum_{k\in\K_{i_0,i_1}}(f\rho_{i_0}^\single)(\kappa(k,v),v,i_1,T)
=\sum_{\tilde\kappa\in l_{i_0}^{-1/2}\Gamma-\vartheta_l}
(f\rho_{i_0}^\single)\Big(\tilde\kappa+\frac{\pi}{\sqrt{l_{i_0}}},
v,i_1,T\Big),
\end{align}
Multiplying this with 
$l_{i_0}^{\frac{|V|-1}{2}-|E|}=l_{i_0}^{-\frac{|\vec E\setminus\vec T_0|}{2}}=l_{i_0}^{-\dim\cH}$,
we interpret it as a Riemann sum. It converges to an integral, uniformly 
for $v$ in compact sets. We conclude that the expression in 
\eqref{eq:first-riemann-sum} converges as $\sigma\to\infty$ to the r.h.s.\ 
in formula \eqref{eq:vague-singletime}. 
\end{proof}

\subsection{Proof of Theorem \ref{cor:weak-convergence-single-time}:
weak convergence}
\label{sec:25}

Let us first state a lemma computing the Gaussian integral
over currents of the graph. It is proved in appendix 
\ref{app:B}. 

\begin{lemma}
\label{le:integral-kappa-single}
The following formula holds:
\begin{align}
\label{eq:claim-lemma-integral-kappa-no-square-single}
& \int_{\cH}
\exp\Big(-\sum_{(i,j)\in\vec E} \frac{\kappa_{ij}^2}{2\omega_{ij}}
\Big) \, d\kappa_{\cH} 
= 2^{|E|-|V|+1}\pi^{|E|-\frac{|V|-1}{2}}\frac{\prod_{\{i,j\}\in E}\omega_{ij}}{
\sqrt{\sum_{S\in\T}\prod_{\{i,j\}\in S}\omega_{ij}}}.
\end{align}
\end{lemma}

We first prove that $\mu^\single_{i_0}$ is a probability measure. 
For $v\in\Omega_{i_0}$ and $T\in\T$, one has 
\begin{align}
&\sum_{i_1\in V}\int_\cH\rho_{i_0}^\single(\kappa,v,i_1,T)\, d\kappa_\cH 
= 2^{|E|-|V|+1}\pi^{-\frac{|V|-1}{2}}
\exp\Big(\sum_{\{i,j\}\in E} W_{ij}\left(
1-\cosh(v_i-v_j)\right)\Big) \nonumber\\
& \cdot\prod_{\{i,j\}\in E\setminus T}\frac{1}{2\omega_{ij}}
\cdot\frac{\prod_{\{i,j\}\in E}\omega_{ij}}{
\sqrt{\sum_{S\in\T}\prod_{\{i,j\}\in S}\omega_{ij}}}\prod_{i\in V\setminus\{i_0\}}e^{-v_i}. 
\end{align}
Summing over the spanning trees $T$ yields the expression
\begin{align}
\pi^{-\frac{|V|-1}{2}}
\exp\Big(\sum_{\{i,j\}\in E} W_{ij}\left(
1-\cosh(v_i-v_j)\right)\Big) 
\sqrt{\sum_{S\in\T}\prod_{\{i,j\}\in S}\omega_{ij}}\prod_{i\in V\setminus\{i_0\}}e^{-v_i}. 
\end{align}
Integrating this over $v$ and using $\omega_{ij}=\frac12 W_{ij}e^{v_i+v_j}$ gives 1 
by Fact \ref{fact:normalization-susy} in the appendix. 
Because vague convergence of sub-probability measures to a probability measure
implies weak convergence, Lemma~\ref{le:vague-conv-single}
yields the claimed weak convergence.

\section{Proof for double timescale}
\label{se:double-timescale}
\subsection{Proof of Theorem \ref{thm:proba-of-a-path}}
\label{se:joint-density-local-times}

In this section, the time horizons $\sigma$ and $\sigma'$ are kept fixed. 
The proof follows the same lines as the proof of Theorem 
\ref{thm:proba-of-a-path-single}. For $i\in V$, let 
$k_i'=\sum_{j\in V:\{i,j\}\in E}k_{ij}'$.
Similarly to the derivation of \eqref{eq:proba-under-consideration-single}, 
we obtain 
\begin{align}
& P_{i_0}(K_{k,\sigma,k',\sigma'}\cap L_{\sigma,\sigma'}(A)\cap 
E_{i_1,\vec T,\sigma,i_1',\vec T',\sigma'})\cr
= & |\Pi_{i_0,i_1}(k,\vec T)|\cdot|\Pi_{i_1,i_1'}(k',\vec T')|
\int_A\exp\Big(
\sum_{\{i,j\}\in E} W_{ij}\Big(1-\sqrt{1+l_i+l_i'}\sqrt{1+l_j+l_j'}\Big)\Big)\cr
& \cdot 
\prod_{i\in V\setminus\{i_1'\}}\frac{1}{\sqrt{1+l_i+l_i'}} 
\prod_{(i,j)\in\vec{E}}\left(\frac{W_{ij}}{2}\right)^{k_{ij}+k_{ij}'}
\cdot\V(k,l,i_1)\V(k',l',i_1') \prod_{i\in V\setminus\{i_0\}} dl_idl_i'
\label{eq:proba-under-consideration-double}
\end{align}
with the volume factor $\V(k,l,i_1)$ given in \eqref{eq:volume-factor-k-single}
and, using the notation from \eqref{eq:Vk}, 
\begin{align}
\label{eq:volume-factor-k-prime-double}
\V(k',l',i_1')=\cV(k_{i_1'}',l_{i_1'}')\prod_{i\in V\setminus\{i_1'\}}\cV(k_i'-1,l_i')=
\frac{(l_{i_1'}')^{k_{i_1'}'}}{(k_{i_1'}')!}
\prod_{i\in V\setminus\{i_1'\}} \frac{(l_i')^{k_i'-1}}{(k_i'-1)!},
\end{align}
which is obtained by integration over the jump times between times 
$\sigma$ and $\sigma+\sigma'$. In analogy to \eqref{eq:factor-kl-single}, 
we obtain 
\begin{align}
\label{eq:factor-kl-prime-double}
|\Pi_{i_1,i_1'}(k',\vec T')| \V(k',l',i_1')
=\prod_{(i,j)\in\vec{E}}\frac{(l_i')^{k_{ij}'}}{(k_{ij}')!}
\prod_{(i,j)\in \vec T'}\frac{k_{ij}'}{l_i'}.
\end{align}
Inserting the last identity and \eqref{eq:factor-kl-single} in 
\eqref{eq:proba-under-consideration-double}, the claim follows.

\subsection{Double timescale asymptotics of the density of a path}
\label{se:asymptotics}

\paragraph{Scales of the variables.} 
Recall the definition \eqref{eq:def-omega} of $\omega_{ij}$ and 
$\omega_{ij}'$ and 
formula \eqref{eq:kij-in-terms-of-omega-single}. Its primed variant
is given by,
cf.\ \eqref{eq:def-kappa-prime} and \eqref{eq:def-u-v},
\begin{align}
\label{eq:kij-in-terms-of-omega}
k_{ij}'=l_{i_0}'\omega_{ij}'+\sqrt{l_{i_0}'}\kappa_{ij}', \quad
\frac{\sigma'}{l'_{i_0}}=\sum_{i\in V} e^{2u_i}.
\end{align}
By \eqref{eq:kij-in-terms-of-omega-single}, $\sigma$ and $l_{i_0}$ live on 
the same scale when all $v_i$ are  
bounded. A similar statement holds for $\sigma'$, $l_{i_0}'$, and $u_i$. 
Consequently, on the event $B_{\sigma,\sigma'}(M)$ defined in \eqref{eq:def-event-B}, 
all $l_i$ and all $k_{ij}$ have the same order of magnitude as $\sigma$
and all $l_i'$ and all $k_{ij}'$ have the same order of magnitude as $\sigma'$. 
By the definition \eqref{eq:def-s} of $s_i$, one has $v_i=u_i-l_{i_0}^{-1/2}s_i$. Hence,
for any given $M>0$, on the event $B_{\sigma,\sigma'}(M)$, in the limit as 
$\sigma\to\infty$, one has 
\begin{align}
\label{eq:e-v-e-u}
e^{v_i} = e^{u_i}(1+O_M(\sigma^{-1/2})), \quad
\omega_{ij}=\omega_{ij}'(1+O_M(\sigma^{-1/2})). 
\end{align}

Next, we give a version of Lemma \ref{le:second-asymptotics-single}
for two timescales.

\begin{lemma}
\label{le:second-asymptotics}
For $M>0$, on the events $B_{\sigma,\sigma'}(M)$, one has the following 
in the limit as $\min\{\sigma,\sigma'\sigma^{-2}\}\to\infty$:
\begin{align}
& \exp\Big(
\sum_{\{i,j\}\in E} W_{ij}\left(1-\sqrt{1+l_i+l_i'}\sqrt{1+l_j+l_j'}\right)\Big)
\prod_{i\in V\setminus\{i_1'\}}\frac{1}{\sqrt{1+l_i+l_i'}} \cr 
= & (l_{i_0}')^{-\frac{|V|-1}{2}} \exp\Big(\sum_{\{i,j\}\in E} W_{ij}\Big(
1-\cosh(u_i-u_j)-\frac12e^{u_i+u_j}(s_i-s_j)^2\Big)\Big) \cr
& \cdot\prod_{(i,j)\in\vec{E}}\exp(-l_{i_0}\omega_{ij}-l_{i_0}'\omega_{ij}')
\prod_{i\in V\setminus\{i_1'\}}e^{-u_i} 
\cdot \left(1+O_{M,W,G}\left(\sigma^{-1/2}+\frac{\sigma^2}{\sigma'}\right)\right).
\label{eq:2nd-asymptotics}
\end{align}
\end{lemma}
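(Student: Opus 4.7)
The plan is a direct Taylor expansion, working throughout on the event $B_{\sigma,\sigma'}(M)$. By \eqref{eq:ref-sum-li-sigma} and the boundedness of $u_i,v_i$, all $l_i$ are of order $l_{i_0}\sim\sigma$ and all $l_i'$ are of order $l_{i_0}'\sim\sigma'$, so $(1+l_i)/l_i'=O_M(\sigma/\sigma')\to 0$ in the regime $\sigma'\sigma^{-2}\to\infty$. I would first dispose of the product $\prod_{i\neq i_1'}(1+l_i+l_i')^{-1/2}$: writing $1+l_i+l_i'=l_i'(1+(1+l_i)/l_i')$ and using $l_i'=l_{i_0}'e^{2u_i}$ together with a single Taylor expansion gives
\[
(1+l_i+l_i')^{-1/2}=(l_{i_0}')^{-1/2}e^{-u_i}\bigl(1+O_M(\sigma/\sigma')\bigr),
\]
and multiplying over the $|V|-1$ vertices produces $(l_{i_0}')^{-(|V|-1)/2}\prod_{i\neq i_1'}e^{-u_i}$ up to a factor $1+O_{M,G}(\sigma/\sigma')$, which is absorbed into the target error.

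The core of the argument is the edgewise identity
\[
W_{ij}\sqrt{(1+l_i+l_i')(1+l_j+l_j')}=2l_{i_0}'\omega_{ij}'+2l_{i_0}\omega_{ij}+W_{ij}\cosh(u_i-u_j)+\omega_{ij}'(s_i-s_j)^2+O_{M,W}(\sigma^{-1/2}+\sigma^2/\sigma').
\]
To prove it I would Taylor-expand each factor as $\sqrt{1+l_i+l_i'}=\sqrt{l_i'}+(1+l_i)/(2\sqrt{l_i'})-(1+l_i)^2/(8(l_i')^{3/2})+O((1+l_i)^3/(l_i')^{5/2})$, multiply the expansions for $i$ and $j$, and collect terms by order of magnitude. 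The leading piece is $\sqrt{l_i'l_j'}=l_{i_0}'e^{u_i+u_j}=2l_{i_0}'\omega_{ij}'/W_{ij}$. The two cross terms of order $\sigma$ combine, after substituting $\sqrt{l_j'/l_i'}=e^{u_j-u_i}$, $1+l_i=1+l_{i_0}e^{2v_i}$, and the key identity $2v_i+u_j-u_i=v_i+v_j+(s_j-s_i)/\sqrt{l_{i_0}}$ coming from $v_i=u_i-s_i/\sqrt{l_{i_0}}$, and Taylor-expanding $e^{(s_j-s_i)/\sqrt{l_{i_0}}}$ to second order. Symmetrizing in $i\leftrightarrow j$ produces precisely $W_{ij}\cosh(u_i-u_j)+2l_{i_0}\omega_{ij}+\omega_{ij}(s_i-s_j)^2$ with an $O_{M,W}(\sigma^{-1/2})$ remainder; the replacement of $\omega_{ij}$ by $\omega_{ij}'$ in the quadratic fluctuation term costs only an additional $O(\sigma^{-1/2})$ by \eqref{eq:e-v-e-u}. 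The three further contributions from the Taylor expansion, namely $(1+l_i)(1+l_j)/(4\sqrt{l_i'l_j'})$ and $\sqrt{l_j'}(1+l_i)^2/(8(l_i')^{3/2})$ together with its $i\leftrightarrow j$ counterpart, are each $O(\sigma^2/\sigma')$, while the higher-order remainder contributes only $O(\sigma^3/(\sigma')^2)=o(\sigma^2/\sigma')$.

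Summing the edgewise identity over $\{i,j\}\in E$, rewriting the drift as a directed-edge sum via $2\sum_{\{i,j\}\in E}\omega_{ij}=\sum_{(i,j)\in\vec E}\omega_{ij}$, and exponentiating (the total error in the exponent still tends to zero), produces the exponential factors on the right-hand side of \eqref{eq:2nd-asymptotics} together with a multiplicative error $1+O_{M,W,G}(\sigma^{-1/2}+\sigma^2/\sigma')$. Combining with the product treated in the first paragraph finishes the proof. The delicate point is the cancellation of the $O(\sqrt{\sigma})$ contributions arising from the first-order term in the expansion of $e^{(s_j-s_i)/\sqrt{l_{i_0}}}$: each individual cross term contains a divergent piece $\pm\sqrt{l_{i_0}}\,\omega_{ij}(s_j-s_i)$, but the sum over the two cross terms vanishes thanks to the antisymmetry $s_j-s_i=-(s_i-s_j)$, and this is precisely why the scaling factor $\sqrt{l_{i_0}}$ in the definition \eqref{eq:def-s} of $s_i$ is the correct one for the fluctuations.
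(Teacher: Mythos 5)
Your proof is correct and takes essentially the same route as the paper: both arguments hinge on the Taylor expansion $\sqrt{1+l_i+l_i'}=\sqrt{l_i'}+(1+l_i)/(2\sqrt{l_i'})+O_M(\sigma^2(\sigma')^{-3/2})$, the identities $\frac12(\sqrt{l_i'/l_j'}+\sqrt{l_j'/l_i'})=\cosh(u_i-u_j)$ and $\frac12(\sqrt{l_i'/l_j'}l_j+\sqrt{l_j'/l_i'}l_i)=l_{i_0}e^{v_i+v_j}\cosh((s_i-s_j)/\sqrt{l_{i_0}})$, and a second-order Taylor expansion of the latter $\cosh$, followed by the $\omega_{ij}\leadsto\omega_{ij}'$ replacement via \eqref{eq:e-v-e-u}. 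The only difference is presentational: you package the computation into a single ``edgewise identity'' and make explicit the cancellation of the antisymmetric $\pm\sqrt{l_{i_0}}\,\omega_{ij}(s_j-s_i)$ pieces, whereas the paper absorbs that cancellation implicitly into the $\cosh$ identity and treats the factors $\prod\exp(l_{i_0}\omega_{ij}+l_{i_0}'\omega_{ij}')$ and $\prod(1+l_i+l_i')^{-1/2}$ as separate bookkeeping steps.
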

\begin{proof}
During the proof, we work on the events $B_{\sigma,\sigma'}(M)$ in the limit
$\min\{\sigma,\sigma'\sigma^{-2}\}\to\infty$. Note that this implies 
$\sigma'\gg\sigma\to\infty$. Furthermore, all $\sigma'/l_i'$ and $\sigma/l_i$ 
are bounded from above and below by $M$-dependent positive constants. 
In analogy to \eqref{eq:prod-sqrt-l} we obtain 
\begin{align}
& \sqrt{1+l_i+l_i'}
= \sqrt{l_i'}+\frac{1+l_i}{2\sqrt{l_i'}}+O\left(\frac{(1+l_i)^2}{(l_i')^{3/2}}\right)
=\sqrt{l_i'}+\frac{1+l_i}{2\sqrt{l_i'}}+O_M\left(\sigma^2(\sigma')^{-3/2}\right), 
\nonumber\\
& \sqrt{1+l_i+l_i'}\sqrt{1+l_j+l_j'}
= \sqrt{l_i'l_j'}+\frac12\sqrt{\frac{l_i'}{l_j'}}(1+l_j)
+\frac12\sqrt{\frac{l_j'}{l_i'}}(1+l_i)+O_M\left(\frac{\sigma^2}{\sigma'}\right).
\end{align}
Inserting the representation \eqref{eq:def-u-v} of $l_i',l_j'$ yields 
\begin{align}
& \frac12\left(\sqrt{\frac{l_i'}{l_j'}}(1+l_j)
+\sqrt{\frac{l_j'}{l_i'}}(1+l_i)\right)
= \frac12(e^{u_i-u_j}+e^{u_j-u_i})+
\frac{l_{i_0}}{2}\left(e^{u_i-u_j+2v_j}+e^{u_j-u_i+2v_i}\right) \nonumber\\
&= \cosh(u_i-u_j)+l_{i_0} e^{v_i+v_j}\cosh((u_i-v_i)-(u_j-v_j)).
\label{eq:intermediate-step-cosh}
\end{align}
Next, we insert the definition \eqref{eq:def-s} of $s_i$ and 
replace $\cosh$ by its Taylor expansion $\cosh x=1+\frac12x^2+O(x^3)$, 
$x\to 0$. We obtain the following expression for the last summand in 
\eqref{eq:intermediate-step-cosh}:
\begin{align}
& l_{i_0} e^{v_i+v_j}\cosh\Big(\frac{s_i-s_j}{\sqrt{l_{i_0}}}\Big)
= l_{i_0} e^{v_i+v_j}\Big(1+\frac{(s_i-s_j)^2}{2l_{i_0}}
+O\Big(\frac{(s_i-s_j)^3}{l_{i_0}^{3/2}}\Big)\Big)\nonumber\\
=&\sqrt{l_il_j} + \frac12e^{v_i+v_j}(s_i-s_j)^2 + O_M\left(\sigma^{-1/2}\right).
\label{eq:first-appearence-s}
\end{align}
Using \eqref{eq:prod-omega} and its primed version, we obtain 
\begin{align}
&\exp\Big(
\sum_{\{i,j\}\in E} W_{ij}\left(1-\sqrt{1+l_i+l_i'}\sqrt{1+l_j+l_j'}\right)\Big)
\prod_{(i,j)\in\vec{E}}\exp(l_{i_0}\omega_{ij}+l_{i_0}'\omega_{ij}') 
\label{eq:expression-exp}\\
= & \exp\Big(\sum_{\{i,j\}\in E} W_{ij}\Big(
1-\cosh(u_i-u_j)-\frac12e^{v_i+v_j}(s_i-s_j)^2\Big)\Big) 
\cdot\Big(1+O_{M,W,G}\Big(\sigma^{-1/2}+\frac{\sigma^2}{\sigma'}\Big)\Big).
\nonumber
\end{align}
By \eqref{eq:e-v-e-u}, it follows 
\begin{align}
e^{v_i+v_j}(s_i-s_j)^2=e^{u_i+u_j}(s_i-s_j)^2+O_M(\sigma^{-1/2}). 
\end{align}
Furthermore, using $\sqrt{1+l_i+l_i'}=\sqrt{l_i'}(1+O_M(\sigma/\sigma'))$, we calculate
\begin{align}
\prod_{i\in V\setminus\{i_1'\}}\frac{1}{\sqrt{1+l_i+l_i'}} 
= &  \left(1+O_{M,G}\left(\frac{\sigma}{\sigma'}\right)\right)
(l_{i_0}')^{-\frac{|V|-1}{2}}
\prod_{i\in V\setminus\{i_1'\}}e^{-u_i}.
\end{align}
Combining these facts with \eqref{eq:expression-exp} completes the proof of the lemma. 
\end{proof}

The following theorem connects the density $\rho_{i_0}^\bigsusy$ in the 
Definition \ref{def:h2-extended}
of the extended version of the $\htwo$ model to the asymptotics 
of VRJP.

\begin{theorem}{\bf (Limiting joint density)}
\label{thm:claim-thm-asymptotics-collected}
Consider the setup of Definition \ref{def:events}. 
For $M>0$, on the events $B_{\sigma,\sigma'}(M)$, one has the following 
in the limit as $\min\{\sigma,\sigma'\sigma^{-2}\}\to\infty$:
\begin{align}
& P_{i_0}(K_{k,\sigma,k',\sigma'}\cap L_{\sigma,\sigma'}(A)\cap 
E_{i_1,\vec T,\sigma,i_1',\vec T',\sigma'})
\nonumber\\
= &\left(1+O_{M,W,G}\left(\sigma^{-1/2}+\frac{\sigma^2}{\sigma'}\right)\right)
\int_A\rho^\bigsusy_{i_0}\Lambda_{\sigma,\sigma',i_0}(dl\, dl')
\label{eq:claim-thm-asymptotics-collected}
\end{align}
with the following measure on $\cL_{\sigma,\sigma'}$
\begin{align}
\label{eq:def-measure-Lambda}
\Lambda_{\sigma,\sigma',i_0}(dl\, dl')
=\frac{4^{1-|V|}}{l_{i_0}^{|E|}(l_{i_0}')^{|E|+\frac{|V|-1}{2}}}
\frac{\sigma\sigma'}{l_{i_0}l_{i_0}'}
\prod_{i\in V\setminus\{i_0\}}\frac{l_{i_0}l_{i_0}'}{l_il_i'}\, dl_idl_i'
\end{align}
and the function 
$\rho^\bigsusy_{i_0}=\rho^\bigsusy_{i_0}(\kappa,\kappa',s,v,u,i_1,i_1',T,T')$ 
defined in \eqref{eq:def-rho-susy}.
\end{theorem}
\begin{proof}
Since we want to apply Theorem \ref{thm:proba-of-a-path}, we first
derive the asymptotics of the factors $\pp$ on the event 
$B_{\sigma,\sigma'}(M)$ as $\sigma'\gg\sigma\to\infty$. 
We apply Lemma \ref{le:first-asymptotics-single} and replace some 
$\omega_{ij}$'s by $\omega_{ij}'$'s. The second identity in \eqref{eq:e-v-e-u} 
allows us to do these replacements. This yields 
\begin{align}
\pp(k,l,\vec T)
= & \frac{1}{(2\pi l_{i_0})^{|E|}} 
\exp\Big(l_{i_0}\sum_{(i,j)\in\vec E}\omega_{ij}-v_{i_1}
-\sum_{(i,j)\in\vec E}\frac{\kappa_{ij}^2}{2\omega_{ij}'}\Big) \nonumber\\
& \cdot\prod_{i\in V\setminus\{i_1\}}e^{-2v_i}
\prod_{(i,j)\in\vec{E}\setminus\vec T}\frac{1}{\omega_{ij}'}
\prod_{\{i,j\}\in E}\omega_{ij}'\cdot(1+O_{M,W,G}(\sigma^{-1/2})).
\label{eq:lemma-prod-Wl-hoch-kold}
\end{align}
Note that the first occurrence of $\omega_{ij}$ in the preceding equation, 
i.e.\ in $l_{i_0}\sum_{(i,j)\in\vec E}\omega_{ij}$, is kept without replacement 
$\omega_{ij}\not\leadsto\omega_{ij}'$, as it is scaled with $l_{i_0}$.

Similarly, on the same event, one has the following as $\sigma'\to\infty$:
\begin{align}
\pp(k',l',\vec T') 
= & \frac{1}{(2\pi l_{i_0}')^{|E|}} 
\exp\Big(l_{i_0}'\!\sum_{(i,j)\in\vec E}\omega_{ij}'+u_{i_1}-u_{i_1'}
-\!\sum_{(i,j)\in\vec E}\frac{(\kappa_{ij}')^2}{2\omega_{ij}'}\Big) \nonumber\\
& \cdot\prod_{i\in V\setminus\{i_1'\}}e^{-2u_i}
\prod_{(i,j)\in\vec{E}\setminus\vec T'}\frac{1}{\omega_{ij}'}
\prod_{\{i,j\}\in E}\omega_{ij}'\cdot(1+O_{M,W,G}((\sigma')^{-1/2})).
\label{eq:lemma-prod-Wl-prime-hoch-kold}
\end{align}
The last formula is proved using the same arguments as in the proof 
of Lemma \ref{le:first-asymptotics-single}, 
cf.\ Appendix~\ref{appendix:combinatorial-factors}, 
with the event $B_\sigma(M)$ replaced by $B_{\sigma,\sigma'}(M)$. 
Furthermore, 
$\sigma$, $k_{ij}$, $l_i$, $\kappa_{ij}$, $v_i$, $\nabla v$, $\omega_{ij}$, 
$\vec T$ used in that proof are replaced by 
$\sigma'$, $k_{ij}'$, $l_i'$, $\kappa_{ij}'$, $u_i$, $\nabla u=u_{i_1}-u_{i_1'}$, 
$\omega_{ij}'$, $\vec T'$, respectively. 
In particular, $l_{i_0}$ is replaced by $l_{i_0}'$, and 
the limit $\sigma\to\infty$ is replaced by $\sigma'\to\infty$.
We remark that $k'\in\K^+_{i_1,i_1'}$ satisfies the Kirchhoff rules 
\begin{align}
\label{eq:Kirchhoff-prime}
\sum_{\substack{j\in V:\\ \{i,j\}\in E}}(k_{ij}'-k_{ji}')=\delta_{i_1}(i)-\delta_{i_1'}(i), \quad
i\in V.
\end{align}
Hence, the equation analogous to \eqref{eq:with-kirchhoff-simplified}
for the proof of \eqref{eq:lemma-prod-Wl-prime-hoch-kold} reads as follows:
\begin{align}
& \sqrt{l_{i_0}'}\sum_{(i,j)\in\vec{E}}\kappa_{ij}'(u_i-u_j)
= \sum_{i\in V}u_i (\delta_{i_1}(i)-\delta_{i_1'}(i)) =\nabla u.
\end{align}

Substituting 
formula \eqref{eq:2nd-asymptotics} from Lemma \ref{le:second-asymptotics} 
and the formulas \eqref{eq:lemma-prod-Wl-hoch-kold}
and \eqref{eq:lemma-prod-Wl-prime-hoch-kold} for $\pp$ 
into the assertion \eqref{eq:density-path} of Theorem \ref{thm:proba-of-a-path}
yields 
\begin{align}
&P_{i_0}(K_{k,\sigma,k',\sigma'}\cap L_{\sigma,\sigma'}(A)\cap E_{i_1,\vec T,\sigma,i_1',\vec T',\sigma'})
\nonumber\\
=& \int_A (l_{i_0}')^{-\frac{|V|-1}{2}} \exp\Big(\sum_{\{i,j\}\in E} W_{ij}\Big(
1-\cosh(u_i-u_j)-\frac12e^{u_i+u_j}(s_i-s_j)^2\Big)\Big) 
\cdot\prod_{i\in V\setminus\{i_1'\}}e^{-u_i} \nonumber\\
& \cdot \Big(1+O_{M,W,G}\Big(\sigma^{-1/2}+\frac{\sigma^2}{\sigma'}\Big)\Big)
\cdot\frac{1}{(2\pi l_{i_0})^{|E|}} 
\exp\Big(-v_{i_1}
-\sum_{(i,j)\in\vec E}\frac{\kappa_{ij}^2}{2\omega_{ij}'}\Big) 
\cdot\prod_{i\in V\setminus\{i_1\}}e^{-2v_i}\nonumber\\
&\cdot\prod_{(i,j)\in\vec{E}\setminus\vec T}\frac{1}{\omega_{ij}'}
\prod_{\{i,j\}\in E}\omega_{ij}'\cdot(1+O_{M,W,G}(\sigma^{-1/2})) 
\cdot\frac{1}{(2\pi l_{i_0}')^{|E|}} 
\exp\Big(u_{i_1}-u_{i_1'}
-\sum_{(i,j)\in\vec E}\frac{(\kappa_{ij}')^2}{2\omega_{ij}'}\Big) \nonumber\\
& \cdot\prod_{i\in V\setminus\{i_1'\}}e^{-2u_i}
\prod_{(i,j)\in\vec{E}\setminus\vec T'}\frac{1}{\omega_{ij}'}
\prod_{\{i,j\}\in E}\omega_{ij}'\cdot(1+O_{M,W,G}((\sigma')^{-1/2}))
\prod_{i\in V\setminus\{i_0\}} \, dl_idl'_i.
\label{eq:huge-monster}
\end{align}
The error term $1+O_{M,W,G}\left(\sigma^{-1/2}+\frac{\sigma^2}{\sigma'}\right)$
dominates all other error terms in this formula. In analogy to 
\eqref{eq:prod-trees-single}, one has 
\begin{align}
& \prod_{(i,j)\in\vec{E}\setminus\vec T}\frac{1}{\omega_{ij}'}
\prod_{\{i,j\}\in E}\omega_{ij}'
= \prod_{\{i,j\}\in T}\omega_{ij}'\prod_{\{i,j\}\in E}\frac{1}{\omega_{ij}'}.
\label{eq:prod-trees}
\end{align}
Collecting the factors $e^{-u_i}$ and $e^{-v_i}$ and using $u_{i_0}=v_{i_0}=0$ 
and \eqref{eq:e-v-e-u} gives 
\begin{align}
& \prod_{i\in V\setminus\{i_1'\}}e^{-u_i} \cdot\exp(-v_{i_1})
\prod_{i\in V\setminus\{i_1\}}e^{-2v_i}\cdot\exp(u_{i_1}-u_{i_1'})
\prod_{i\in V\setminus\{i_1'\}}e^{-2u_i}\cr
= & e^{u_{i_1}+v_{i_1}+2u_{i_1'}} \prod_{i\in V\setminus\{i_0\}}e^{-3u_i-2v_i} 
= e^{2v_{i_1}+2u_{i_1'}} \prod_{i\in V\setminus\{i_0\}}e^{-3u_i-2v_i} 
(1+O_M(\sigma^{-1/2})) .
\label{eq:balance-exp-u}
\end{align}
Substituting \eqref{eq:prod-trees} and \eqref{eq:balance-exp-u} 
into \eqref{eq:huge-monster} and simplifying the remaining terms yields 
\begin{align}
&P_{i_0}(K_{k,\sigma,k',\sigma'}\cap L_{\sigma,\sigma'}(A)\cap E_{i_1,T,\sigma,i_1',T',\sigma'})
= \Big(1+O_{M,W,G}\Big(\sigma^{-1/2}+\frac{\sigma^2}{\sigma'}\Big)\Big)
\nonumber\\
& \cdot \int_A\frac{l_{i_0}^{-|E|}(l_{i_0}')^{-\frac{|V|-1}{2}-|E|}}{(2\pi)^{2|E|}} 
\exp\Big(\sum_{\{i,j\}\in E} W_{ij}\Big(
1-\cosh(u_i-u_j)-\frac12e^{u_i+u_j}(s_i-s_j)^2\Big)\Big) \nonumber\\
& \cdot \prod_{\{i,j\}\in T}\omega_{ij}'
\prod_{\{i,j\}\in T'}\omega_{ij}'\prod_{\{i,j\}\in E}\frac{1}{(\omega_{ij}')^2}
\cdot\exp\Big(
-\sum_{(i,j)\in\vec E} \frac{\kappa_{ij}^2+(\kappa_{ij}')^2}{2\omega_{ij}'}
\Big) \nonumber\\
& \cdot 
e^{2v_{i_1}+2u_{i_1'}} \prod_{i\in V\setminus\{i_0\}}e^{-3u_i-2v_i} \, dl_idl_i'.
\end{align}
Using Definition \ref{def:h2-extended} of $\rho_{i_0}^\bigsusy$ and the relations 
\eqref{eq:def-v}, \eqref{eq:def-u-v}, \eqref{eq:kij-in-terms-of-omega-single}, 
and \eqref{eq:kij-in-terms-of-omega},
claim \eqref{eq:claim-thm-asymptotics-collected} follows. 
\end{proof}

\subsection{Continuum limit and vague convergence}
\label{se:continuum-limit}

The main result in this section, stated in Corollary 
\ref{cor:vague-convergence-kappa-etc} below, deals with a vague convergence 
of the random vector in \eqref{eq:vector-kappa-etc}. This requires two 
ingredients. First, we need to calculate the Jacobian of the transformation 
$(s,u)\mapsto(l,l')$. Second, we deal with convergence of a Riemann sum indexed 
by $(\kappa(\sigma),\kappa'(\sigma,\sigma'))$ to an integral. 
These two ingredients are treated in the following two lemmas. 

Recall the variables $s,u,v$ from \eqref{eq:def-s}, \eqref{eq:def-u-v},
and \eqref{eq:def-v}, written in the form 
\begin{align}
\label{eq:s-u-in-terms-of-l-l-prime}
s_i=\frac{\sqrt{l_{i_0}}}{2}\left(\log\frac{l_i'}{l_{i_0}'}-\log\frac{l_i}{l_{i_0}}\right), \quad
u_i=\frac12\log\frac{l_i'}{l_{i_0}'}, \quad
v_i=\frac12\log\frac{l_i}{l_{i_0}}.
\end{align}
The first lemma considers the transformation 
$g_{\sigma,\sigma'}:\cL_{\sigma,\sigma'}\to\Omega_{i_0}^2$, $(l,l')\mapsto (s,u)$.
We restrict it to the following variant 
$\tilde B_{\sigma,\sigma'}(M)$ of the event $B_{\sigma,\sigma'}(M)$, cf.\
\eqref{eq:def-event-B}:
\begin{align}
\tilde B_{\sigma,\sigma'}(M)
=& \left\{ (l,l')\in\cL_{\sigma,\sigma'}:\, 
|s_i(l,l')|,|u_i(l,l')|,|v_i(l,l')|\le M\text{ for all }i\in V\right\}.
\end{align}

\begin{lemma}{\bf (Jacobian of the variable transformation)}
\label{le:jacobian}
For $M,\sigma'>0$ and $\sigma>|V|M^2e^{2M}$, the map $g_{\sigma,\sigma'}$
is a bijection between $\tilde B_{\sigma,\sigma'}(M)$ and its range. 
The following formula describes the corresponding transformation of 
measure:
\begin{align}
\label{eq:change-prod-dli}
(l_{i_0}l_{i_0}')^{|E|+\frac{1-|V|}{2}}
\Lambda_{\sigma,\sigma',i_0}(dl\, dl') 
=& (4\sqrt{l_{i_0}}l_{i_0}')^{1-|V|}\frac{\sigma\sigma'}{l_{i_0}l_{i_0}'}
\prod_{i\in V\setminus\{i_0\}} \frac{l_{i_0}l_{i_0}'}{l_il_i'} \, dl_idl_i'
\nonumber\\
= &\frac{1}{h_\sigma(s,u)}\prod_{i\in V\setminus\{i_0\}} ds_idu_i ,\\
\text{where}\qquad 
h_\sigma(s,u)=& 1+\frac{1}{\sigma\sqrt{l_{i_0}}}\sum_{i\in V\setminus\{i_0\}}l_is_i.
\end{align}
Here, $l=l(s,u)$ denotes the first component of $g_{\sigma,\sigma'}^{-1}(s,u)$.
For all $(s,u)\in g_{\sigma,\sigma'}[\tilde B_{\sigma,\sigma'}(M)]$ 
the expression $h_\sigma(s,u)$ fulfills the bound 
\begin{align}
\label{eq:estimate-h}
|h_\sigma(s,u)-1|\le \frac{\sqrt{|V|}Me^M}{\sqrt\sigma}<1.
\end{align} 
\end{lemma}
\begin{proof}
Given $(s,u)\in[-M,M]^{V\times V}\cap\operatorname{range} g_{\sigma,\sigma'}$,  
we have to show that it has a \emph{unique} inverse image 
$(l,l')\in\tilde B_{\sigma,\sigma'}(M)$. First, $l_{i_0}'$ is uniquely determined
by $l_{i_0}'=\sigma'/\sum_{i\in V}e^{2u_i}$.
Second, $l_i'=l_{i_0}'e^{2u_i}$, $i\in V$, shows that $l'$ is also uniquely
determined. Third, 
\begin{align}
\sigma=\sum_{i\in V}l_i
=l_{i_0}\sum_{i\in V}\exp\Big(2u_i-\frac{2s_i}{\sqrt{l_{i_0}}}\Big)
\end{align}
gives us a transcendental equation for $l_{i_0}$. It determines $l_{i_0}$
uniquely because 
\begin{align}
\frac{\partial}{\partial l_{i_0}}\Big[
l_{i_0}\sum_{i\in V}\exp\Big(2u_i-\frac{2s_i}{\sqrt{l_{i_0}}}\Big)
\Big]
=\sum_{i\in V}\Big(1+\frac{s_i}{\sqrt{l_{i_0}}}\Big)
\exp\Big(2u_i-\frac{2s_i}{\sqrt{l_{i_0}}}\Big)>0;
\end{align}
here we use that by our choice of $\sigma$
\begin{align}
\label{eq:est-s-over-l}
l_{i_0}=\frac{\sigma}{\sum_{i\in V}e^{2v_i}}\ge \frac{\sigma}{|V|e^{2M}}
\quad\Rightarrow\quad 
\frac{|s_i|}{\sqrt{l_{i_0}}}\le\frac{\sqrt{|V|}Me^M}{\sqrt\sigma}<1.
\end{align}
This implies that $v_i=u_i-s_i/\sqrt{l_{i_0}}$, $i\in V$, is uniquely 
determined, too. Finally, all $l_i$, $i\in V$, are also uniquely 
determined because of $l_i=l_{i_0}e^{2v_i}$. 
Summarizing, we have shown that $g_{\sigma,\sigma'}$ restricted to
$\tilde B_{\sigma,\sigma'}(M)$ is one-to-one. 
To calculate the Jacobi determinant of the map 
$(l_i,l_i')_{i\in V\setminus\{i_0\}}\mapsto(s_i,u_i)_{i\in V\setminus\{i_0\}}$ 
one observes in analogy to \eqref{eq:dv-dl} for $i,j\in V\setminus\{i_0\}$, 
using \eqref{eq:s-u-in-terms-of-l-l-prime}:
\begin{align}
\frac{\partial s_i}{\partial l_j}
=-\frac{\sqrt{l_{i_0}}}{2l_i}\Big(\delta_{ij}+\frac{l_i}{l_{i_0}}
\Big(1+\frac{s_i}{\sqrt{l_{i_0}}}\Big)\Big), \quad 
\frac{\partial u_i}{\partial l_j'}
=\frac{1}{2l_i'}\Big(\delta_{ij}+\frac{l_i'}{l_{i_0}'}\Big), \quad
\frac{\partial u_i}{\partial l_j}=0.
\end{align}
In particular, $\partial s/\partial l$ and $\partial u/\partial l'$ 
are rank 1 perturbations of invertible diagonal matrices. Using that 
$\det(\Id+A)=1+\operatorname{tr}(A)$ for rank 1 matrices $A$, we obtain,
cf.\ formula \eqref{eq:det-dv-dl}:
\begin{align}
\left|\det \frac{\partial s}{\partial l}\right|
= &\Big(1+
\sum_{i\in V\setminus\{i_0\}}\frac{l_i}{l_{i_0}}
\Big(1+\frac{s_i}{\sqrt{l_{i_0}}}\Big)
\Big)
\prod_{i\in V\setminus\{i_0\}}\frac{\sqrt{l_{i_0}}}{2l_i}
=\frac{\sigma}{l_{i_0}}h_\sigma(s,u)
\prod_{i\in V\setminus\{i_0\}}\frac{\sqrt{l_{i_0}}}{2l_i}, 
\nonumber\\
\left|\det \frac{\partial u}{\partial l'}\right|
=&\Big(1+
\sum_{i\in V\setminus\{i_0\}}\frac{l_i'}{l_{i_0}'}\Big)
\prod_{i\in V\setminus\{i_0\}}\frac{1}{2l_i'}
=\frac{\sigma'}{l_{i_0}'}\prod_{i\in V\setminus\{i_0\}}\frac{1}{2l_i'}. 
\end{align}
In view of $\partial u/\partial l=0$, it follows 
\begin{align}
\left|\det \frac{\partial (s,u)}{\partial(l,l')}\right|
=\left|\det \frac{\partial s}{\partial l}\right|\cdot 
\left|\det \frac{\partial u}{\partial l'}\right|
= |h_\sigma(s,u)|(4\sqrt{l_{i_0}}l_{i_0}')^{1-|V|}\frac{\sigma\sigma'}{l_{i_0}l_{i_0}'}
\prod_{i\in V\setminus\{i_0\}} \frac{l_{i_0}l_{i_0}'}{l_il_i'}. 
\end{align}
As soon as we know $h_\sigma(s,u)>0$, it follows that the Jacobi determinant
does not vanish and that claim \eqref{eq:change-prod-dli} holds. 

Positivity of $h_\sigma(s,u)$, used in the previous arguments, is an immediate
consequence of \eqref{eq:estimate-h}, which is proven as follows. 
Using \eqref{eq:est-s-over-l}, which works under the assumption 
$\sigma>|V|M^2e^{2M}$, we estimate
\begin{align}
|h_\sigma(s,u)-1|
\le \frac{1}{\sigma}\sum_{i\in V\setminus\{i_0\}}l_i\frac{|s_i|}{\sqrt{l_{i_0}}}
< \frac{1}{\sigma}\sum_{i\in V\setminus\{i_0\}}l_i
<1.
\end{align}
\end{proof}

\medskip
Let $i_0,i_1,i_1'\in V$ and $\sigma,\sigma'>0$. 
Recall the definitions of $\Omega_{i_0}$, $\K_{i_0,i_1,i_1'}$, $\K_{i_0,i_1,i_1'}^+$, 
and $\cL_\sigma$ stated in \eqref{eq:def-Omega}, \eqref{eq:def-K}, and 
\eqref{eq:def-L}, respectively, and the notation 
$\cL_{\sigma,\sigma'}=\cL_\sigma\times\cL_{\sigma'}$. We consider the measure 
\begin{align}
\label{eq:def-lambda-plus}
\lambda^+_{\sigma,\sigma',i_0,i_1,i_1'}=
\sum_{(k,k')\in\K_{i_0,i_1,i_1'}^+}\delta_k\delta_{k'}\,
\Lambda_{\sigma,\sigma',i_0}(dl\, dl') 
\end{align}
defined on $\K_{i_0,i_1,i_1'}\times\cL_{\sigma,\sigma'}$. 
Let $\lambda_{\sigma,\sigma',i_0,i_1,i_1'}$ be defined as 
$\lambda^+_{\sigma,\sigma',i_0,i_1,i_1'}$ with the only
difference that the summation over $\K_{i_0,i_1,i_1'}^+$ is replaced by 
$\K_{i_0,i_1,i_1'}$. 
We introduce the following variant of the map $F_{\sigma,\sigma',i_0}$, cf.\ 
\eqref{eq:def-F-sigma-sigma'-i0}:
\begin{align}
F_{\sigma,\sigma',i_0,i_1,i_1'}:\K_{i_0,i_1,i_1'}\times\cL_{\sigma,\sigma'}
& \to (\R^{\vec E})^2\times\Omega_{i_0}^3 ,\cr
(k,k',l,l')& \mapsto (\kappa,\kappa',s,v,u)
\end{align}
using again the equations \eqref{eq:def-v}, \eqref{eq:def-kappa}, and 
\eqref{eq:def-u-v}--
\eqref{eq:def-kappa-prime}.

\begin{lemma}{\bf (Vague convergence of the reference measure)}
\label{le:limit-image-measure}\\
The image measure $F_{\sigma,\sigma',i_0,i_1,i_1'}[\lambda^+_{\sigma,\sigma',i_0,i_1,i_1'}]$
converges vaguely as $\sigma,\sigma'\to\infty$ to 
\begin{align}
d\kappa_\cH\,d\kappa_\cH'\, \prod_{i\in V\setminus\{i_0\}} 1_{\{u_i=v_i\}}\, ds_i\, du_i; 
\end{align}
recall that $1_{\{u_i=v_i\}}du_i$ denotes the Lebesgue measure on the diagonal of $\R^2$.
In other words, for any continuous compactly supported test function 
$f:(\R^{\vec E})^2\times\Omega_{i_0}^3\to\R$, one has 
\begin{align}
& \lim_{\sigma,\sigma'\to\infty}\int\limits_{\K_{i_0,i_1,i_1'}\times\cL_{\sigma,\sigma'}} 
f\circ F_{\sigma,\sigma',i_0,i_1,i_1'}\, d\lambda_{\sigma,\sigma',i_0,i_1,i_1'}^+ 
=  \int\limits_{\Omega_{i_0}^2}\int\limits_{\cH^2} 
f(\kappa,\kappa',s,u,u)\, d\kappa_\cH d\kappa_\cH'\,
ds_i\, du_i . 
\label{eq:claim-vague-conv-test-fns1}
\end{align}
\end{lemma}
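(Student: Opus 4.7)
The plan is to combine the Jacobian Lemma \ref{le:jacobian} (which handles the continuous variables $(l,l')$) with a Riemann-sum argument (which handles the discrete variables $(k,k')$). I would fix a continuous, compactly supported test function $f$ on $(\R^{\vec E})^2 \times \Omega_{i_0}^3$ whose support is contained in some compact set $K$.

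My first step would be the change of variables $(l,l') \mapsto (s,u)$: Lemma \ref{le:jacobian} rewrites
$$\Lambda_{\sigma,\sigma',i_0}(dl\,dl') = (l_{i_0}l'_{i_0})^{-|E|+(|V|-1)/2} \prod_{i \in V\setminus\{i_0\}} ds_i\,du_i,$$
where $v$ is determined implicitly by $v_i = u_i - s_i/\sqrt{l_{i_0}}$ and $l_{i_0} = \sigma/\sum_j e^{2v_j}$. On $K$, the variables $s,u,v$ stay bounded, so $l_{i_0} \ge c(K)\sigma$ and $l'_{i_0} \ge c(K)\sigma'$; hence $v_i = u_i + O(\sigma^{-1/2})$ uniformly on $K$, and by uniform continuity of $f$ I would replace $f(\kappa,\kappa',s,v,u)$ with $f(\kappa,\kappa',s,u,u)$ up to a uniformly negligible error.

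Next, I would interpret the sum over $(k,k')$ as a Riemann sum. After fixing a reference directed spanning tree $\vec T_0$, restriction of $k \in \K_{i_0,i_1}$ to $\vec E \setminus \vec T_0$ gives a bijection with $\Z^{\vec E\setminus\vec T_0}$, since the Kirchhoff rule (\ref{eq:Kirchhoff}) recovers $k|_{\vec T_0}$. Under the rescaling (\ref{eq:def-kappa}) and the chart $\iota$ of (\ref{eq:def-iota}), the image in $\R^{\vec E\setminus\vec T_0}$ is an affine translate of $l_{i_0}^{-1/2}\Z^{\vec E\setminus\vec T_0}$, a lattice of mesh $l_{i_0}^{-1/2}$ with cell volume $l_{i_0}^{-(2|E|-|V|+1)/2} = l_{i_0}^{-|E|+(|V|-1)/2}$, exactly matching the prefactor produced by the Jacobian step; analogously for $(k',\kappa')$. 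The admissible $\kappa$-values lie on an affine copy of $\cH$ in $\R^{\vec E}$ whose offset from $\cH$ is $\pm l_{i_0}^{-1/2}$-small (coming from the source on the right-hand side of (\ref{eq:Kirchhoff})) and thus vanishes in the limit. A standard Riemann-sum estimate would then give, uniformly for $(s,u)$ in the projection of $K$,
$$(l_{i_0}l'_{i_0})^{-|E|+(|V|-1)/2} \!\!\sum_{(k,k') \in \K_{i_0,i_1,i_1'}}\!\! f(\kappa,\kappa',s,u,u) \longrightarrow \int_{\cH^2} f(\kappa,\kappa',s,u,u) \, d\kappa_\cH\,d\kappa'_\cH.$$
The restriction $\K^+$ vs.\ $\K$ is eventually vacuous on $K$, since $k_{ij} = l_{i_0}\omega_{ij}+\sqrt{l_{i_0}}\kappa_{ij} > 0$ once $\sigma$ is large (and analogously for $k'$). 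Finally I would integrate the resulting uniform convergence over $(s,u) \in \Omega_{i_0}^2$ to arrive at (\ref{eq:claim-vague-conv-test-fns1}).

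The main obstacle will be the uniformity of the Riemann-sum approximation in $(s,u)$: the lattice mesh $l_{i_0}^{-1/2}$, its affine translation $\omega_{ij}\sqrt{l_{i_0}}$, and the small offset of the lattice from $\cH$ all depend on the sum index through $v$. This is essentially bookkeeping: on $K$, bounded $(s,u)$ forces $v$ to be bounded and $l_{i_0} \asymp \sigma$ uniformly, while uniform continuity of $f$ on its compact support makes the family $\{f(\cdot,\cdot,s,u,u)\}_{(s,u)\in\operatorname{proj}(K)}$ equicontinuous, so the Riemann-sum error bounds are uniform in $(s,u)$.
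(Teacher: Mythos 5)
Your proposal is correct and follows essentially the same strategy as the paper's proof: the change of variables $(l,l')\mapsto(s,u)$ via Lemma \ref{le:jacobian}, identification of $\K_{i_0,i_1}$ with a translate of the lattice whose restriction to $\vec E\setminus\vec T_0$ is $\Z^{\vec E\setminus\vec T_0}$, the matching of the lattice cell volume $l_{i_0}^{-|\vec E\setminus\vec T_0|/2}$ with the Jacobian prefactor, the observation that the positivity constraint $\K^+$ is automatic on the support of $f$ for large $\sigma,\sigma'$, the Riemann-sum-to-integral limit, and dominated convergence in $(s,u)$. The paper's only cosmetic difference is that it fixes reference paths $\pi,\pi'$ and writes the lattice as $l_{i_0}^{-1/2}\Gamma$ sitting exactly inside $\cH$ with the $O(l_{i_0}^{-1/2})$ offset moved into the argument of $f$, whereas you keep the lattice affinely displaced from $\cH$ and note that the displacement vanishes; these are the same observation.
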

\begin{proof}
The proof relies on the same technique as in Lemma \ref{le:vague-conv-single}.
Given a test function $f$ as in the assumption, we claim that there exists 
$\varepsilon>0$, depending on $f$, such that 
\begin{align}
\label{eq:F-inverse-suppf}
F_{\sigma,\sigma',i_0,i_1,i_1'}^{-1}[\supp f]\subseteq 
\K_{i_0,i_1,i_1'}^+\times(\sigma\varepsilon,\sigma)^V\times
(\sigma'\varepsilon,\sigma')^V 
\end{align}
holds for $\sigma,\sigma'$ large enough. 
Indeed, there is a constant $M>0$, depending on $\supp f$, such that 
for any $(k,k',l,l')\in F_{\sigma,\sigma',i_0,i_1,i_1'}^{-1}[\supp f]$, 
all components of 
$F_{\sigma,\sigma',i_0,i_1,i_1'}(k,k',l,l')=:(\kappa,\kappa',s,v,u)$
are bounded in absolute value by $M$. Similarly as in the beginning of the 
proof of Lemma \ref{le:vague-conv-single}, $k_{ij},k_{ij}'>0$ and all 
$l_i/\sigma$ and $l_i'/\sigma'$ are bounded away from $0$ for $\sigma,\sigma'$
large enough. This proves \eqref{eq:F-inverse-suppf}. 
Consequently, vague convergence of the image measure
$F_{\sigma,\sigma',i_0,i_1,i_1'}[\lambda^+_{\sigma,\sigma',i_0,i_1,i_1'}]$ 
as $\sigma,\sigma'\to\infty$ is equivalent to vague convergence of 
$F_{\sigma,\sigma',i_0,i_1,i_1'}[\lambda_{\sigma,\sigma',i_0,i_1,i_1'}]$ to the same
limit. Next, we prove the latter one.

Fix a path from $i_0$ to $i_1$ and another one from $i_1$ to $i_1'$. 
Let $\pi=(\pi_{ij})_{(i,j)\in\vec E}$ respectively $\pi'=(\pi_{ij}')_{(i,j)\in\vec E}$
be the corresponding edge crossing numbers. 
We introduce the shift vectors 
$\vartheta_l=(l_{i_0}^{1/2}\omega_{ij})_{(i,j)\in\vec E}$,
$\vartheta_{l'}=(l_{i_0}'^{1/2}\omega_{ij}')_{(i,j)\in\vec E}\in\cH$, 
cf.\ \eqref{eq:def-omega}. 
Let $\Gamma\subset\cH$ denote the lattice which has $\Z^{\vec E\setminus\vec T_0}$
as its image under the restriction map $\R^{\vec E}\to\R^{\vec E\setminus\vec T_0}$.
When $(k,k')$ runs over $\K_{i_0,i_1,i_1'}$, the corresponding 
$(\kappa-l_{i_0}^{-1/2}\pi,\kappa'-(l_{i_0}')^{-1/2}\pi')$ runs over the shifted 
lattice
$(l_{i_0}^{-1/2}\Gamma-\vartheta_l)\times ((l_{i_0}')^{-1/2}\Gamma-\vartheta_{l'})$.
In other words, for any 
$(l,l')\in\cL_{\sigma,\sigma'}$, one has 
\begin{align}
\label{eq:riemann-sum-f-circ-F}
&\sum_{(k,k')\in\K_{i_0,i_1,i_1'}} f(F_{\sigma,\sigma',i_0,i_1,i_1'}(k,k',l,l'))
=\sum_{\substack{\tilde\kappa\in l_{i_0}^{-1/2}\Gamma-\vartheta_l\\
\tilde\kappa'\in(l_{i_0}')^{-1/2}\Gamma-\vartheta_{l'}}}
f\Big(\tilde\kappa+\frac{\pi}{\sqrt{l_{i_0}}},
\tilde\kappa'+\frac{\pi'}{\sqrt{l_{i_0}'}},s,v,u\Big),
\end{align}
where $s,v,u$ are given in \eqref{eq:s-u-in-terms-of-l-l-prime}.
Integrating first \eqref{eq:riemann-sum-f-circ-F} over $l$ and $l'$ with 
appropriate weights and using 
Lemma \ref{le:jacobian} in the second equality, we obtain 
\begin{align}
\label{eq:integral-with-subst}
& \int_{\K_{i_0,i_1,i_1'}\times\cL_{\sigma,\sigma'}} f\circ F_{\sigma,\sigma',i_0,i_1,i_1'}\, 
d\lambda_{\sigma,\sigma',i_0,i_1,i_1'} \\
= & \int_{\cL_{\sigma,\sigma'}} 
\sum_{\substack{\tilde\kappa\in l_{i_0}^{-1/2}\Gamma-\vartheta_l\\
\tilde\kappa'\in(l_{i_0}')^{-1/2}\Gamma-\vartheta_{l'}}}
f\Big(\tilde\kappa+\frac{\pi}{\sqrt{l_{i_0}}},
\tilde\kappa'+\frac{\pi'}{\sqrt{l_{i_0}'}},s,v,u\Big)
\Big|_{\substack{s,v,u\\ \text{ from \eqref{eq:s-u-in-terms-of-l-l-prime}}}} 
\Lambda_{\sigma,\sigma',i_0}(dl\, dl') \nonumber\\
= & \int_{\Omega_{i_0}^2}\frac{1}{(l_{i_0}l_{i_0}')^{\frac{|\vec E\setminus\vec T_0|}{2}}}
\hspace{-1mm}
\sum_{\substack{\tilde\kappa\in l_{i_0}^{-1/2}\Gamma-\vartheta_l\\
\tilde\kappa'\in(l_{i_0}')^{-1/2}\Gamma-\vartheta_{l'}}}
f\Big(\tilde\kappa+\frac{\pi}{\sqrt{l_{i_0}}},
\tilde\kappa'+\frac{\pi'}{\sqrt{l_{i_0}'}},s,v,u\Big)
\Big|_{\substack{l_{i_0},l_{i_0}',v\\ \text{ from \eqref{eq:subst-l,l-prime,v}}}} 
 \frac{\prod_{i\in V\setminus\{i_0\}}ds_idu_i}{h_\sigma(s,u)} \nonumber
\end{align}
with the substitution 
\begin{align}
\label{eq:subst-l,l-prime,v}
v_i=u_i-\frac{s_i}{\sqrt{l_{i_0}}}, \quad
l_{i_0}=\frac{\sigma}{\sum_{i\in V}e^{2u_i-2s_i/\sqrt{l_{i_0}}}}, \quad
l_{i_0}'=\frac{\sigma'}{\sum_{i\in V}e^{2u_i}}.
\end{align}
Note that $|u_i-v_i|\le M/\sqrt{l_{i_0}}$ holds, whenever the integrand in 
\eqref{eq:integral-with-subst} is non-zero. 
We interpret the Riemann sum in \eqref{eq:integral-with-subst} as an 
integral over functions which are constant on boxes associated 
to a shifted version of 
$l_{i_0}^{-1/2}\Gamma\times(l_{i_0}')^{-1/2}\Gamma$. These boxes have volume
$(l_{i_0}l_{i_0}')^{\frac{|\vec E\setminus\vec T_0|}{2}}$. Using the dominated convergence 
theorem and the bound \eqref{eq:estimate-h} to perform the limit 
$\sigma,\sigma'\to\infty$, 
the claim \eqref{eq:claim-vague-conv-test-fns1} follows. 
\end{proof}

We endow the set $\cO_{\sigma,\sigma',i_0}$ defined in \eqref{eq:def-event-O-i0}
with the measure $\lambda_{\sigma,\sigma',i_0}^+$ 
which is characterized as follows. When we restrict $\lambda_{\sigma,\sigma',i_0}^+$ 
to $\K_{i_0,i_1,i_1'}
\times\cL_{\sigma,\sigma'}\times\{i_1\}\times\{i_1'\}\times\{T\}\times\{T'\}$
for any $i_i,i_1'\in V$, $T\in\T_{i_1}$, and $T'\in\T_{i_1'}$ and project
it down to $\K_{i_0,i_1,i_1'}\times\cL_{\sigma,\sigma'}$, it becomes 
$\lambda_{\sigma,\sigma',i_0,i_1,i_1'}^+$. Recall the definition 
\eqref{eq:def-F-sigma-sigma'-i0} of the map $F_{\sigma,\sigma',i_0}$ and  
the definition \eqref{eq:def-xi} of the random variable 
$\xi_{\sigma,\sigma'}$. Theorem \ref{thm:claim-thm-asymptotics-collected} and the last 
lemma are combined in the following corollary.

\begin{corollary}{\bf (Vague convergence to the extended $\htwo$ model)}
\label{cor:vague-convergence-kappa-etc}
The joint sub-probability distribution of
\begin{align}
& (\kappa(\sigma),\kappa'(\sigma,\sigma'),s(\sigma,\sigma'),v(\sigma), 
u(\sigma,\sigma'),
Z_\sigma,Z_{\sigma+\sigma'},T^\lastexit(0,\sigma),T^\lastexit(\sigma,\sigma+\sigma'))
\end{align}
with respect to $P_{i_0}(\cdot\cap\{\xi_{\sigma,\sigma'}\in\cO_{\sigma,\sigma',i_0}\})$ converges 
vaguely as $\min\{\sigma,\sigma'\sigma^{-2}\}\to\infty$ to $\mu^\bigsusy_{i_0}$. 
In other words, for any continuous compactly supported test function 
$f:(\R^{\vec E})^2\times\Omega_{i_0}^3\times V^2\times\T^2\to\R$, one has 
\begin{align}
\lim_{\min\{\sigma,\sigma'\sigma^{-2}\}\to\infty}
E_{i_0}\left[f(F_{\sigma,\sigma',i_0}(\xi_{\sigma,\sigma'})),\xi_{\sigma,\sigma'}\in\cO_{\sigma,\sigma',i_0}\right]
=\int\limits_{\cH^2\times\Omega_{i_0}^3\times V^2\times\T^2}
f\, d\mu^\bigsusy_{i_0}.
\end{align}
\end{corollary}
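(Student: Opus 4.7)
The plan is to combine the sharp asymptotic density from Theorem~\ref{thm:claim-thm-asymptotics-collected} with the vague convergence of the reference measure from Lemma~\ref{le:limit-image-measure}. Fix a continuous compactly supported test function $f\colon(\R^{\vec E})^2\times\Omega_{i_0}^3\times V^2\times\T^2\to\R$. Since $V^2\times\T^2$ is finite and the projection of $\supp f$ onto the continuous coordinates is compact, I would first pick $M>0$ so large that $\supp f$ is contained in $\{|\kappa_{ij}|,|\kappa_{ij}'|,|s_i|,|u_i|,|v_i|\le M\}\times V^2\times\T^2$. The argument already used in the proof of Lemma~\ref{le:limit-image-measure} then shows that, for $\sigma,\sigma'$ large enough, any configuration $\xi_{\sigma,\sigma'}\in\cO_{\sigma,\sigma',i_0}$ whose image under $F_{\sigma,\sigma',i_0}$ lies in $\supp f$ automatically satisfies $B_{\sigma,\sigma'}(M)$ and $(k,k')\in\K^+_{i_0,i_1,i_1'}$.

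Next, I would decompose the expectation according to the finitely many values of the discrete coordinates and over $(k,k')\in\K^+_{i_0,i_1,i_1'}$. Invoking Theorem~\ref{thm:claim-thm-asymptotics-collected} with this $M$ replaces the joint probability by
\begin{align*}
P_{i_0}(K_{k,\sigma,k',\sigma'}\cap dl\,dl'\cap E_{i_1,T,\sigma,i_1',T',\sigma'})
=\bigl(1+O_{M,W,G}(\sigma^{-1/2}+\sigma^2/\sigma')\bigr)\rho_{i_0}^{\bigsusy}\,\Lambda_{\sigma,\sigma',i_0}(dl\,dl').
\end{align*}
Because this multiplicative error is uniform on $\supp f$ and vanishes as $\min\{\sigma,\sigma'\sigma^{-2}\}\to\infty$, the expectation under consideration equals $\bigl(1+o(1)\bigr)$ times
\begin{align*}
\sum_{i_1,i_1',T,T'}\int (f\cdot\rho_{i_0}^{\bigsusy})\circ F_{\sigma,\sigma',i_0,i_1,i_1'}\,d\lambda^+_{\sigma,\sigma',i_0,i_1,i_1'}.
\end{align*}

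Now for each fixed tuple $(i_1,i_1',T,T')$, the function $(\kappa,\kappa',s,v,u)\mapsto f(\kappa,\kappa',s,v,u,i_1,i_1',T,T')\cdot\rho_{i_0}^{\bigsusy}(\kappa,\kappa',s,v,u,i_1,i_1',T,T')$ is continuous with the same compact support as $f(\cdot,i_1,i_1',T,T')$ in its continuous arguments, so Lemma~\ref{le:limit-image-measure} applies and produces in the limit its integral against $d\kappa_\cH\,d\kappa_\cH'\prod_{i\in V\setminus\{i_0\}}1_{\{u_i=v_i\}}\,ds_i\,du_i$. Summing the finitely many such contributions and comparing with the definition~\eqref{eq:def-mu-susy} of $\mu_{i_0}^{\bigsusy}$ identifies the limit as $\int f\,d\mu_{i_0}^{\bigsusy}$, which is what we want. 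The main obstacle is the uniformity of the $O$-error in Theorem~\ref{thm:claim-thm-asymptotics-collected} across all contributing terms; this is ensured precisely because the $O$-constant depends only on $M$, $W$, and $G$, and $M$ has been chosen once at the outset to dominate every continuous coordinate on $\supp f$.
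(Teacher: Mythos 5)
Your proposal is correct and follows essentially the same route as the paper's proof: both choose $M$ from the compact support of $f$ to invoke Theorem~\ref{thm:claim-thm-asymptotics-collected}, extract the uniform $O_{M,W,G}$ error factor, and then pass to the limit via Lemma~\ref{le:limit-image-measure} applied to the continuous compactly supported product $f\rho_{i_0}^{\bigsusy}$. You spell out the decomposition over the finitely many discrete coordinates $(i_1,i_1',T,T')$ before applying the lemma (a step the paper leaves implicit, since Lemma~\ref{le:limit-image-measure} is stated for fixed $i_1,i_1'$), but the substance is identical.
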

\begin{proof}
Because $f$ is compactly supported, we can choose a constant $M>0$ such 
that for any $(\kappa,\kappa',s,v,u,i_1,i_1',T,T')\in\supp f$
all the components of $\kappa,\kappa',s,v,u$ are bounded in absolute value
by $M$. 
Theorem \ref{thm:claim-thm-asymptotics-collected} yields in the limit as 
$\min\{\sigma,\sigma'\sigma^{-2}\}\to\infty$
\begin{align}
& E_{i_0}\left[f(F_{\sigma,\sigma',i_0}(\xi_{\sigma,\sigma'})),\xi_{\sigma,\sigma'}\in\cO_{\sigma,\sigma',i_0}\right]
\nonumber\\
= & \left(1+O_{M,W,G}\left(\sigma^{-1/2}+\frac{\sigma^2}{\sigma'}\right)\right) 
\int_{\cO_{\sigma,\sigma',i_0}} f\rho_{i_0}^\bigsusy \, d(F_{\sigma,\sigma',i_0}[\lambda_{\sigma,\sigma',i_0}^+]) 
\end{align}
Note that the density $\rho_{i_0}^\bigsusy$ is continuous. Hence, 
Lemma \ref{le:limit-image-measure} implies that the last integral converges 
as $\min\{\sigma,\sigma'\sigma^{-2}\}\to\infty$ to the following integral.
\begin{align}
& \int\limits_{\cH^2\times\Omega_{i_0}^2\times V^2\times\T^2}
(f\rho_{i_0}^\bigsusy)(\kappa,\kappa',s,u,u,i_1,i_1',T,T')
\, d\kappa_\cH d\kappa_\cH'\, ds_i\, du_i\, di_1\, di_1'\, dT \, dT'  
\nonumber\\
= & \int\limits_{\cH^2\times\Omega_{i_0}^3\times V^2\times\T^2}
f\, d\mu^\bigsusy_{i_0},
\end{align}
where we used the definition \eqref{eq:def-mu-susy} of $\mu^\bigsusy_{i_0}$ 
in the last step. This proves the claim. 
\end{proof}

\subsection{Marginals and weak convergence}
\label{se:marginals}

In this section, we calculate marginals of the extended $\htwo$ measure 
$\mu_{i_0}^\bigsusy$ by integrating out the current vectors $\kappa$ and $\kappa'$,
summing over the endpoints $i_1$ and $i_1'$ of paths, and summing over the
spanning tree $T$. 
The main theorems follow now easily by combining the previous results:

\medskip\noindent
\begin{proof}[Proof of Theorem \ref{thm:marginal-rho-susy}]
Combining Lemma \ref{le:integral-kappa-single} with the Definition 
\ref{def:h2-extended} of $\mu^\bigsusy_{i_0}$, 
the marginal of $(s,u,i_1,i_1',T,T')$ with respect to $\mu_{i_0}^\bigsusy$ 
is given by
\begin{align}
&   \frac{1}{\pi^{|V|-1}}
\exp\Big(\sum_{\{i,j\}\in E} W_{ij}\Big(
1-\cosh(u_i-u_j)-\frac12e^{u_i+u_j}(s_i-s_j)^2\Big)\Big) 
\prod_{\{i,j\}\in T'}\omega_{ij}'
\nonumber\\
& \cdot
\frac{\prod_{\{i,j\}\in T}\omega_{ij}'}{
\sum_{S\in\T}\prod_{\{i,j\}\in S}\omega_{ij}'}
\frac{e^{2u_{i_1}+2u_{i_1'}}}{\big(\sum_{j\in V}e^{2 u_j}\big)^2}
\prod_{i\in V\setminus\{i_0\}}e^{-u_i}\, ds_i\, du_i
\cdot di_1\, di_1'\, dT\, dT'
\label{eq:marginal1-rho-susy}
\end{align}
Note that the following holds: 
\begin{align}
\prod_{\{i,j\}\in T}\omega_{ij}'=2^{-(|V|-1)}\prod_{\{i,j\}\in T}W_{ij}e^{u_i+u_j}
\end{align}
Summing over $i_1,i_1'\in V$ and $T\in\T$, claim 
\eqref{eq:marginal2-big-susy} follows. 
Using this and  the fact that $\mu^\susy_{i_0}$ is a probability measure, cf.\ 
\eqref{eq:normalizing-const}, imply that $\mu^\bigsusy_{i_0}$ is a 
probability measure as well. 
\end{proof}

\medskip\noindent
\begin{proof}[Proof of Theorem \ref{thm:weak-convergence}]
By Theorem \ref{thm:marginal-rho-susy}, 
the measure $\mu_{i_0}^\bigsusy$ is a probability measure. 
Because vague convergence of sub-probability measures to a probability measure
implies weak convergence, Corollary \ref{cor:vague-convergence-kappa-etc} 
yields the claimed weak convergence. For the constant test function $f=1$,
the last claim follows. 
\end{proof}

\medskip\noindent
\begin{proof}[Proof of Theorem \ref{thm:second-marginal-rho-susy}]
On the one hand, by Theorem \ref{cor:weak-convergence-single-time}, 
the law of the reduced vector 
$(\kappa(\sigma),v(\sigma),Z_\sigma,T^\lastexit(0,\sigma))$
with respect to the 
sub-probability measure $P_{i_0}(\cdot\cap\{\xi_\sigma\in\cQ_{\sigma,i_0}\})$
converges weakly as $\sigma\to\infty$ to $\mu_{i_0}^\single$. 

On the other hand, by Theorem \ref{thm:weak-convergence}, the same vector 
converges weakly to the marginal $\cL_{\mu_{i_0}^\bigsusy}(\kappa,v,i_1,T)$ 
as $\min\{\sigma,\sigma'\sigma^{-2}\}\to\infty$ with respect to the 
sub-probabi\-li\-ty measure $P_{i_0}(\cdot\cap\{\xi_{\sigma,\sigma'}\in\cO_{\sigma,\sigma',i_0}\})$. 
Because of 
$\{\xi_{\sigma,\sigma'}\in\cO_{\sigma,\sigma',i_0}\}\subseteq\{\xi_\sigma\in\cQ_{\sigma,i_0}\}$
we have the same weak limit $\cL_{\mu_{i_0}^\bigsusy}(\kappa,v,i_1,T)$
with respect to the sub-probability measure 
$P_{i_0}(\cdot\cap\{\xi_\sigma\in\cQ_{\sigma,i_0}\})$, again as 
$\min\{\sigma,\sigma'\sigma^{-2}\}\to\infty$. However, the second time scale
$\sigma'$ does not play any role in the last statement anymore. Hence, we 
may replace the limit $\min\{\sigma,\sigma'\sigma^{-2}\}\to\infty$ by the 
single-time limit $\sigma\to\infty$. 

Comparing the two approaches, the weak limits agree: 
$\cL_{\mu_{i_0}^\bigsusy}(\kappa,v,i_1,T)=\mu_{i_0}^\single$. 
\end{proof}

An alternative proof of Theorem \ref{thm:second-marginal-rho-susy}
directly computes the marginal by integrating out the dropped variables. 
However, we feel that the proof presented here is simpler.

\begin{appendix}
\section{Asymptotics of the combinatorial factors}
\label{appendix:combinatorial-factors}

\begin{proof}[Proof of Lemma \ref{le:first-asymptotics-single}]
We abbreviate
\begin{align}
\label{eq:def-nabla-v}
\nabla v=v_{i_0}-v_{i_1}=-v_{i_1}.
\end{align}
In the whole proof, we work only on the events $B_\sigma(M)$.
All Landau symbols $O$ are understood in the limit as $\sigma\to\infty$. 
Note that on $B_\sigma(M)$, we have $k_{ij}\to\infty$ for any 
$(i,j)\in\vec{E}$ as $\sigma\to\infty$, with $k_{ij}/\sigma$ being bounded 
away from $0$. Hence, by Stirling's formula, 
\begin{align}
k_{ij}!=\sqrt{2\pi}e^{-k_{ij}}k_{ij}^{k_{ij}+\frac12}(1+O_M(\sigma^{-1})).
\end{align}
For $(i,j)\in\vec E$, one has 
\begin{align}
\label{eq:part-of-density-for-one-kij}
k_{ij}\left(\frac{W_{ij}l_i}{2}\right)^{k_{ij}}\frac{1}{k_{ij}!}
= \sqrt{\frac{k_{ij}}{2\pi}} \left(\frac{W_{ij}l_ie}{2k_{ij}}\right)^{k_{ij}}
(1+O_M(\sigma^{-1})).
\end{align} 
If $(i,j)\in\vec T$, this is the $k_{ij}$-dependent part in the 
definition \eqref{eq:def-pp} of $\pp(k,l,\vec T)$. Using 
\begin{align}
\label{eq:kij-asymptotics}
k_{ij}=l_{i_0}(\omega_{ij}+l_{i_0}^{-1/2}\kappa_{ij})
= l_{i_0}\omega_{ij}
\left(1+O_{M,W}\left(\sigma^{-1/2}\right)\right)
\end{align}
and the symmetry $\omega_{ij}=\omega_{ji}$, we deduce 
\begin{align}
\label{eq:prod-sqrt-k}
\prod_{(i,j)\in\vec E} \sqrt{\frac{k_{ij}}{2\pi}} 
=\left(\frac{l_{i_0}}{2\pi}\right)^{|E|}\prod_{\{i,j\}\in E}\omega_{ij}
\cdot(1+O_{M,W,G}(\sigma^{-1/2})). 
\end{align} 
One has 
\begin{align}
\frac{W_{ij}l_ie}{2k_{ij}}
=\frac{W_{ij}l_{i_0} e^{1+2v_i}}{2(l_{i_0}\omega_{ij}+l_{i_0}^{1/2}\kappa_{ij})}
=\frac{\omega_{ij}e^{1+v_i-v_j}}{\omega_{ij}+l_{i_0}^{-1/2}\kappa_{ij}}.
\end{align} 
Consequently, we obtain 
\begin{align}
\label{eq:log-kij}
\log\left[\left(\frac{W_{ij}l_ie}{2k_{ij}}\right)^{k_{ij}}\right]
= & l_{i_0}\left(\omega_{ij}+l_{i_0}^{-1/2}\kappa_{ij}\right)
\left(\log\omega_{ij}+1+v_i-v_j\right) \cr
& -l_{i_0}\left(\omega_{ij}+l_{i_0}^{-1/2}\kappa_{ij}\right)
\log\left(\omega_{ij}+l_{i_0}^{-1/2}\kappa_{ij}\right).
\end{align} 
Using the Taylor expansion 
$x\log x= x_0\log x_0+(1+\log x_0)(x-x_0)+\frac{(x-x_0)^2}{2x_0}+O((x-x_0)^3)$
as $x\to x_0$ at $x_0=\omega_{ij}$ for the second term, we deduce 
\begin{align}
\label{eq:klogW}
& \log\left[\left(\frac{W_{ij}l_ie}{2k_{ij}}\right)^{k_{ij}}\right]
= l_{i_0}\left(\omega_{ij}+l_{i_0}^{-1/2}\kappa_{ij}\right)(\log\omega_{ij}+1+v_i-v_j)\\
& -l_{i_0}\left(\omega_{ij}\log\omega_{ij}+(1+\log\omega_{ij})l_{i_0}^{-1/2}\kappa_{ij}
+\frac{\kappa_{ij}^2}{2\omega_{ij}l_{i_0}}
+O_M\left(\sigma^{-3/2}\right)\right)\cr
= & l_{i_0}\omega_{ij}(1+v_i-v_j)+\sqrt{l_{i_0}}\kappa_{ij}(v_i-v_j)
-\frac{\kappa_{ij}^2}{2\omega_{ij}}
+O_M\left(\sigma^{-1/2}\right).\nonumber
\end{align} 
Since $\omega_{ij}=\omega_{ji}$ and with $(i,j)\in\vec{E}$ there is 
$(j,i)\in\vec{E}$ as well, we have 
\begin{align}
\label{eq:sum-omega-v-zero}
\sum_{(i,j)\in\vec{E}}\omega_{ij}(v_i-v_j)=0.
\end{align}
Note that 
\begin{align}
\sum_{(i,j)\in\vec{E}}\kappa_{ij}v_j
=\sum_{(j,i)\in\vec{E}}\kappa_{ji}v_i
=\sum_{(i,j)\in\vec{E}}\kappa_{ji}v_i.
\end{align}
Using this, Kirchhoff's rule \eqref{eq:Kirchhoff} for $k_{ij}$, and 
the definition \eqref{eq:def-nabla-v} of $\nabla v$, we deduce 
\begin{align}
& \sqrt{l_{i_0}}\sum_{(i,j)\in\vec{E}}\kappa_{ij}(v_i-v_j)
=\sqrt{l_{i_0}}\sum_{(i,j)\in\vec{E}}(\kappa_{ij}-\kappa_{ji})v_i 
=\sum_{(i,j)\in\vec{E}}(k_{ij}-k_{ji})v_i \cr
=& \sum_{i\in V}v_i\sum_{\substack{j\in V:\\ \{i,j\}\in E}}(k_{ij}-k_{ji})
= \sum_{i\in V}v_i (\delta_{i_0}(i)-\delta_{i_1}(i)) =\nabla v.
\label{eq:with-kirchhoff-simplified}
\end{align}
Combining \eqref{eq:sum-omega-v-zero} and \eqref{eq:with-kirchhoff-simplified}
with \eqref{eq:klogW} yields  
\begin{align}
\label{eq:prod-powers-k}
\prod_{(i,j)\in\vec E} \left(\frac{W_{ij}l_ie}{2k_{ij}}\right)^{k_{ij}}
=\exp\Big(l_{i_0}\sum_{(i,j)\in\vec E}\omega_{ij} +\nabla v
-\sum_{(i,j)\in\vec E}\frac{\kappa_{ij}^2}{2\omega_{ij}}
+O_{M,G}\left(\sigma^{-1/2}\right)\Big). 
\end{align} 
Inserting \eqref{eq:prod-sqrt-k} and \eqref{eq:prod-powers-k} into 
\eqref{eq:part-of-density-for-one-kij} and using the definition 
\eqref{eq:def-pp} of $\pp$ yields 
\begin{align}
\label{eq:intermediate-summary}
\pp(k,l,\vec T) 
= & \exp\Big(l_{i_0}\sum_{(i,j)\in\vec E}\omega_{ij} + \nabla v
-\sum_{(i,j)\in\vec E}\frac{\kappa_{ij}^2}{2\omega_{ij}}\Big)\cr
& \cdot\left(\frac{l_{i_0}}{2\pi}\right)^{|E|}\prod_{\{i,j\}\in E}\omega_{ij}
\prod_{(i,j)\in\vec T}\frac{1}{l_i}\prod_{(i,j)\in\vec{E}\setminus\vec T}\frac{1}{k_{ij}} 
\cdot(1+O_{M,W,G}(\sigma^{-1/2})).
\end{align}
Using \eqref{eq:kij-asymptotics} and the fact that $\vec T$ is a spanning tree 
directed towards $i_1$, we obtain 
\begin{align}
\label{eq:tree}
\prod_{(i,j)\in\vec T}\frac{1}{l_i}\prod_{(i,j)\in\vec{E}\setminus\vec T}\frac{1}{k_{ij}}
=l_{i_0}^{-2|E|}\prod_{i\in V\setminus\{i_1\}}e^{-2v_i}
\prod_{(i,j)\in\vec{E}\setminus\vec T}\frac{1}{\omega_{ij}}\cdot(1+O_{M,W,G}(\sigma^{-1/2})). 
\end{align}
Combining this with \eqref{eq:intermediate-summary} yields
\begin{align}
\pp(k,l,\vec T) 
= & \frac{1}{(2\pi l_{i_0})^{|E|}} 
\exp\Big(l_{i_0}\sum_{(i,j)\in\vec E}\omega_{ij} + \nabla v
-\sum_{(i,j)\in\vec E}\frac{\kappa_{ij}^2}{2\omega_{ij}}\Big) \nonumber\\
& \cdot\prod_{i\in V\setminus\{i_1\}}e^{-2v_i}
\prod_{(i,j)\in\vec{E}\setminus\vec T}\frac{1}{\omega_{ij}}
\prod_{\{i,j\}\in E}\omega_{ij}\cdot(1+O_{M,W,G}(\sigma^{-1/2})). 
\label{eq:first-summary}
\end{align}
Using $\nabla v=-v_{i_1}$, the claim follows.
\end{proof}

\section{Proof of Lemma \ref{le:integral-kappa-single}: Gaussian
integral over currents}
\label{app:B}

We endow every undirected edge in $E$ with a counting direction. 
For $\kappa\in\cH$ and $i,j\in V$ such that $\{i,j\}\in E$, we introduce the 
following variables
\begin{align}
\label{eq:def-I-J}
I_{ij}=\frac{1}{\sqrt 2}(\kappa_{ij}-\kappa_{ji}), \quad
J_{ij}=\frac{1}{\sqrt 2}(\kappa_{ij}+\kappa_{ji}).
\end{align}
Note that $I$ is antisymmetric ($I_{ij}=-I_{ji}$) and $J$ is symmetric 
($J_{ij}=J_{ji}$). 
Recall that $\vec T_0$ denotes a directed reference spanning tree and $T_0$ 
its undirected version. 
Recall that the restriction map $\iota:\cH\to\R^{\vec E\setminus\vec T_0}$ is an 
isomorphism. In other words, 
the components $\kappa_{\alpha\beta}$, $(\alpha,\beta)\in\vec E\setminus\vec T_0$, of 
$\kappa\in\cH$ can be chosen arbitrarily while all other $\kappa_{ij}$, 
$(i,j)\in\vec T_0$, are determined by the first. We define now a linear map
$L:\R^{\vec E\setminus\vec T_0}\to\R^{E\setminus T_0}\times\R^E$. Given 
$\tilde\kappa\in\R^{\vec E\setminus\vec T_0}$, we set $\kappa=\iota^{-1}(\tilde\kappa)$
and 
\begin{align}
L(\tilde\kappa)
=\left((I_{ij}(\kappa))_{\{i,j\}\in E\setminus T_0},(J_{ij}(\kappa))_{\{i,j\}\in E}
\right), 
\end{align}
where the vertices $i$ and $j$ in $I_{ij}$ are ordered with respect to the 
counting direction of the edge $\{i,j\}$, in order to have no ambiguity with 
the sign of $I_{ij}$.  
We claim that the determinant of $L$ equals $\pm 2^{\frac{|V|-1}{2}}$. In other
words, this yields the change of measure
\begin{align}
\label{cm}
\iota[d\kappa_\cH]=
\prod_{(i,j)\in\vec{E}\setminus\vec{T}_0}d\kappa_{ij}
=2^{-\frac{|V|-1}{2}}\prod_{\{i,j\}\in E\setminus T_0}dI_{ij}\prod_{\{i,j\}\in E}dJ_{ij}. 
\end{align}
Indeed, with an appropriate order of indices, the matrix associated to $L$ is 
given by 
\begin{align}
\left(\begin{array}{c}
\left(\dfrac{\partial J_{ij}}{\partial\ka_{\al\be}}\right)_{\substack{\{i,j\}\in T_0,\\ (\al,\be)\in\vec{E}\setminus\vec{T}_0}}
\vspace{3mm}\\ 
\left(\dfrac{\partial}{\partial\ka_{\al\be}}\left(\begin{array}{c}I_{ij}\\J_{ij}\end{array}\right)\right)_{\substack{\{i,j\}\in E\setminus T_0,\\ (\al,\be)\in\vec{E}\setminus\vec{T}_0}}
\end{array}
\right),
\end{align}
which can be written as follows, by an appropriate choice of order on the second index:
\begin{align}
\left(\begin{array}{cc}
\left(\dfrac{\partial J_{ij}}{\partial\ka_{\al\be}}\right)_{\substack{\{i,j\}\in T_0,\\ (\be,\al)\in\vec{T}_0}}&\left(\dfrac{\partial J_{ij}}{\partial\ka_{\al\be}}\right)_{\substack{\{i,j\}\in T_0,\\ (\al,\be):\,\{\al,\be\}\in E\setminus T_0}}
\vspace{3mm}
\\ 
\left(\dfrac{\partial}{\partial\ka_{\al\be}}\left(\begin{array}{c}I_{ij}\\J_{ij}\end{array}\right)\right)_{\substack{\{i,j\}\in E\setminus T_0,\\ (\be,\al)\in\vec{T}_0}}
&\left(\dfrac{\partial}{\partial\ka_{\al\be}}\left(\begin{array}{c}I_{ij}\\J_{ij}\end{array}\right)\right)_{\substack{\{i,j\}\in E\setminus T_0,\\ (\al,\be):\,\{\al,\be\}\in E\setminus T_0}}
\end{array}
\right).
\end{align}
We order the indices $(\al,\be)$ with $\{\al,\be\}\in E\setminus T_0$ in the second 
block column successively by groups of two, associated to each nonoriented edge 
$\{\al,\be\}\in E\setminus T_0$, taking first the oriented edge corresponding to 
the arbitrary counting direction. We claim that 
the Jacobian matrix above takes the following block triangular form:
\begin{align}
\label{eq:jacobian-L} 
\left(\begin{array}{cc}
\sqrt{2}\, \id_{|T_0|\times|T_0|}& \left(*\right)_{|T_0|\times 2|E\setminus T_0|}\vspace{3mm}
\\ 
(0)_{2|E\setminus T_0|\times|T_0|}
&\dfrac{1}{\sqrt{2}}
\left(
\begin{array}{cc}
1 &-1\\
1 & 1
\end{array}
\right) \otimes \id_{|E\setminus T_0|\times|E\setminus T_0|}
\end{array}
\right),
\end{align}
In order to see why the first block column takes the claimed form, 
let $(\beta,\alpha)\in\vec T_0$ and take 
$\tilde\kappa=(\delta_\beta(i)\delta_\alpha(j))_{(i,j)\in\vec E\setminus\vec T_0}
\in\R^{\vec E\setminus\vec T_0}$. Then, $\kappa=\iota^{-1}(\tilde\kappa)$ is 
given by $\kappa_{\alpha\beta}=\kappa_{\beta\alpha}=1$ and $\kappa_{ij}=0$ 
otherwise. This implies 
$I_{ij}(\kappa)=0$ for all $\{i,j\}\in E$, 
$J_{\alpha\beta}(\kappa)=\sqrt 2$, and $J_{ij}(\kappa)=0$ otherwise. 
This explains the blocks $\sqrt 2\id$ and $0$.
The expression for the lower right block in the matrix \eqref{eq:jacobian-L} 
follows from the definition \eqref{eq:def-I-J} using that 
$\kappa_{\alpha\beta}$ with $(\alpha,\beta)\in\vec E\setminus\vec T_0$ 
are linearly independent variables.
We conclude
\begin{align}
|\det L|=\sqrt{2}^{|T_0|}\left[\det\dfrac{1}{\sqrt{2}}
\left(
\begin{array}{cc}
1 &-1\\
1 & 1
\end{array}
\right)\right]^{|E\setminus T_0|}
=2^{\frac{|T_0|}{2}}\cdot 1^{|E\setminus T_0|} 
=2^{\frac{|V|-1}{2}} 
,
\end{align} 
in other words \eqref{cm} holds.

Note that for all $\{i,j\}\in E$, 
\begin{align}
I_{ij}^2+J_{ij}^2
=\frac12\left((\kappa_{ij}-\kappa_{ji})^2+(\kappa_{ij}+\kappa_{ji})^2\right)
=\kappa_{ij}^2+\kappa_{ji}^2.
\end{align}
Consequently, we obtain
\begin{align}
& \int_\cH
\exp\Big(-\sum_{(i,j)\in\vec E} \frac{\kappa_{ij}^2}{2\omega_{ij}}
\Big) \, d\kappa_{\cH}
\cr
= & 2^{-\frac{|V|-1}{2}}\int_{\R^{E\setminus T_0}}\prod_{\{i,j\}\in E}
\exp\left(-\frac{I_{ij}^2}{2\omega_{ij}}\right)
\prod_{\{i,j\}\in E\setminus T_0}dI_{ij}\prod_{\{i,j\}\in E}
\int_\R\exp\left(-\frac{J_{ij}^2}{2\omega_{ij}}\right)dJ_{ij}
\cr
= & 2^{\frac{|E|-|V|+1}{2}}\pi^{\frac{|E|}{2}}\int_{\R^{E\setminus T_0}}\prod_{\{i,j\}\in E}
\exp\left(-\frac{I_{ij}^2}{2\omega_{ij}}\right)
\prod_{\{i,j\}\in E\setminus T_0}dI_{ij}\,
\prod_{\{i,j\}\in E}\sqrt{\omega_{ij}}.
\end{align}
For $e\in E\setminus T_0$, let $c_e$ be the unique oriented cycle in 
$T_0\cup\{e\}$ containing the edge $e$ in its counting direction. 
For another edge $g\in E$, let $\sigma_{eg}$ be $+1$ if $e$ and $g$ 
appear in the same counting direction in $c_e$, $-1$ if $e$ and $g$ 
appear in opposite counting directions in $c_e$, and $0$ if $g\notin c_e$.  
We define the matrix $B=(B_{ef})_{e,f\in E\setminus T_0}$ by 
\begin{align}
B_{ee}=\sum_{g\in c_e}\frac{1}{\omega_g}, \quad
B_{ef}=\sum_{g\in c_e\cap c_f}
\frac{\sigma_{eg}\sigma_{fg}}{\omega_g}\quad\text{for }e\neq f. 
\end{align}
Let $I=(I_{ij})_{\{i,j\}\in E\setminus T_0}$ denote the restriction to $E\setminus T_0$.
Note that the full vector $(I_{ij})_{\{i,j\}\in E}$ can be retrieved from its
restriction $I$ using the formula
$I_f=\sum_{e\in E\setminus T_0}\sigma_{ef}I_e$. This formula implies 
\begin{align}
\sum_{\{i,j\}\in E}\frac{I_{ij}^2}{2\omega_{ij}}
=\sum_{e\in E\setminus T_0}\sum_{f\in E\setminus T_0}I_eI_f
\sum_{g\in E}\frac{\sigma_{eg}\sigma_{fg}}{2\omega_g}
= \frac{1}{2} I^tBI.
\end{align}
We abbreviate $dI=\prod_{\{i,j\}\in E\setminus T_0}dI_{ij}$. Using the formula
for the determinant of $B$ given as last displayed formula on page 20 of
\cite{Keane-Rolles2000}, which is based on the matrix-tree theorem, we obtain 
\begin{align}
& \int_{\R^{E\setminus T_0}}\prod_{\{i,j\}\in E}
\exp\left(-\frac{I_{ij}^2}{2\omega_{ij}}\right)
\prod_{\{i,j\}\in E\setminus T_0}dI_{ij}
= \int_{\R^{E\setminus T_0}}e^{-\frac{1}{2}I^tBI}dI \nonumber\\ 
= &\frac{(2\pi)^{\frac{|E|-|V|+1}{2}}}{\sqrt{\det B}}
=  (2\pi)^{\frac{|E|-|V|+1}{2}}\frac{\prod_{\{i,j\}\in E}\sqrt{\omega_{ij}}}{
\sqrt{\sum_{S\in\T}\prod_{\{i,j\}\in S}\omega_{ij}}}.
\end{align}
We conclude that
\begin{align}
\int_\cH
\exp\Big(-\sum_{(i,j)\in\vec E} \frac{\kappa_{ij}^2}{2\omega_{ij}}
\Big) \, d\kappa_{\cH}
= & 2^{\frac{|E|-|V|+1}{2}}\pi^{\frac{|E|}{2}}
(2\pi)^{\frac{|E|-|V|+1}{2}}\frac{\prod_{\{i,j\}\in E}\sqrt{\omega_{ij}}}{
\sqrt{\sum_{S\in\T}\prod_{\{i,j\}\in S}\omega_{ij}}}
\prod_{\{i,j\}\in E}\sqrt{\omega_{ij}} \nonumber\\
= & 2^{|E|-|V|+1}\pi^{|E|-\frac{|V|-1}{2}}\frac{\prod_{\{i,j\}\in E}\omega_{ij}}{
\sqrt{\sum_{S\in\T}\prod_{\{i,j\}\in S}\omega_{ij}}}.
\end{align}

\section{Review of used results}
\label{sec:app-review-results}

\paragraph{Normalization of the supersymmetric hyperbolic sigma model.}
\begin{fact}
\label{fact:normalization-susy} 
\begin{align}
\int_{\Omega_{i_0}}
\exp\Big(\sum_{\{i,j\}\in E} W_{ij}\left(
1-\cosh(v_i-v_j)\right)\Big) 
\sqrt{\sum_{S\in\T}\prod_{\{i,j\}\in S}W_{ij}e^{v_i+v_j}}\prod_{i\in V\setminus\{i_0\}}
\frac{e^{-v_i}}{\sqrt{2\pi}}\,dv_i =1. 
\nonumber
\end{align}
\end{fact}
We refer to formula (3) in Theorem 2 in \cite{sabot-tarres-zeng2015} with 
$\phi_i=1$ for all $i$, which gives
a new proof of this normalization using an interpretation of $u$ through the Green's 
function of random Schr\"odinger operators. 

The first proof of Fact \ref{fact:normalization-susy} was given in 
\cite{disertori-spencer-zirnbauer2010}. It heavily uses the supersymmetry of the model,
which is only seen in the version of the $\htwo$ model with Grassmann variables, cf.\ 
formula (5.1) and Proposition 2 in appendix C of 
\cite{disertori-spencer-zirnbauer2010}. Note that the reference point 
$i_0$ is not mentioned explicitly in \cite{disertori-spencer-zirnbauer2010}, but the
pinning strengths $\varepsilon_i$ in that paper play the role of the weights $W_{ii_0}$ 
connecting any vertex $i\in V\setminus\{i_0\}$ to the reference vertex $i_0$, 
while the coupling constants $\beta J_{ij}$ play the role of all other weights $W_{ij}$, 
$\{i,j\}\in E$ with $i,j\neq i_0$.

The link between the $\htwo$ model in horospherical coordinates, treated in 
\cite{disertori-spencer-zirnbauer2010}, and its tree version, 
used in the current paper, is given by the matrix tree theorem stated in formula 
(2.17) in \cite{disertori-spencer-zirnbauer2010}. More precisely, 
the sum $\sum_{T'\in\T}\prod_{\{i,j\}\in T'}W_{ij}e^{u_i+u_j}$ arises from 
the matrix tree theorem as the same determinant that occurs when integrating out 
the Grassmann variables 
$\overline\psi_i$ and $\psi_i$. A variant of this argument concerning only the 
marginal of $u$ with respect to $\mu_{i_0}^\susy$ is also described in 
\cite{disertori-spencer-zirnbauer2010}; see formula (1.4) in that paper for the 
statement. 

There is at least another proof of Fact \ref{fact:normalization-susy} that does not
use supersymmetry. In the paper \cite{sabot-tarres2012} of Sabot and Tarr\`es, the 
normalization comes from the fact
that the marginal in $u$ of the measure is interpreted as a probability distribution for 
a random variable associated to asymptotic behavior of the vertex-reinforced jump
process.

\paragraph{Density of paths.}
According to the first displayed formula on page 569 of \cite{sabot-tarres2016}, 
the probability that, at time $t$, the process $Z$ has followed a path
$Z_0=\pi_ 0$, $\pi_1$,
$\ldots$, $Z_t = \pi_n$ with jump times respectively in $[t_i,t_i+dt_i]$, 
$i=1,\ldots,n$, where $t_0=0<t_1 <\cdots < t_n < t =t_{n+1}$, is given by 
\begin{align}
\label{eq:density-sabot-tarres}
\exp\Big(\sum_{\{i,j\}\in E} W_{ij}\Big(1-\sqrt{1+2\ell_i}\sqrt{1+2\ell_j}\Big)\Big)
\prod_{i\in V\setminus\{i_1\}}\frac{1}{\sqrt{1+2\ell_i}} 
\prod_{i=1}^n W_{\pi_{i-1}\pi_j}dt_i.
\end{align}
This formula is precisely the cited formula in the special case of offset $\varphi=1$,
with the abbreviations from \cite{sabot-tarres2016} already inserted. 
Their time $t$ and local times $\ell_i$ differ from our time $\sigma=2t$ and local
times $l_i=2\ell_i$ by a factor of $1/2$ as can be seen by comparing formula 
\eqref{eq:def-D} with the expression for $D(s)$ on page 567 of 
\cite{sabot-tarres2016}. Rescaling all times by this factor $1/2$ yields
that the expression in \eqref{eq:density-sabot-tarres} equals
 \begin{align}
\exp\Big(\sum_{\{i,j\}\in E} W_{ij}\Big(1-\sqrt{1+l_i}\sqrt{1+l_j}\Big)\Big)
\prod_{i\in V\setminus\{i_1\}}\frac{1}{\sqrt{1+l_i}} 
\prod_{i=1}^n \frac{W_{\pi_{i-1}\pi_j}}{2}d\sigma_i.
\label{eq:density-s-t-2}
\end{align}
Formula \eqref{eq:prob-density-fixed-path-single} is just obtained from this 
density by integrating over an event. 
Formula \eqref{eq:density-s-t-2} in almost the same notation is also cited in 
formula (2) in Section~3.3 of \cite{zeng13}.

\paragraph{Counting paths.}
\begin{fact}
\label{eq:fact-counting-paths}
For all $i_0,i_1\in V$, $k\in\K^+_{i_0,i_1}$, and all $\vec T\in\vec\T_{i_1}$, one has
\begin{align}
\label{eq:number-paths-KR}
|\Pi_{i_0,i_1}(k,\vec T)|=
& \frac{\prod_{i\in V}k_i!}{\prod_{(i,j)\in\vec{E}}k_{ij}!}\cdot
\frac{\prod_{(i,j)\in \vec T}k_{ij}}{\prod_{i\in V\setminus\{i_1\}}k_i}
=k_{i_1}!\prod_{i\in V\setminus\{i_1\}}(k_i-1)!
\frac{\prod_{(i,j)\in \vec T}k_{ij}}{\prod_{(i,j)\in\vec{E}}k_{ij}!}
\end{align}
with $k_i$ defined in \eqref{eq:def-k-i}. 
\end{fact}

In Lemma 6 of \cite{Keane-Rolles2000}, the first equality in \eqref{eq:number-paths-KR}
is stated with an additional
summation over all $\vec T\in\vec\T$. However, the proof of this lemma is based on 
a combinatorial lemma (Lemma 5 of \cite{Keane-Rolles2000}) which is also applicable for 
any fixed $\vec T\in\vec\T_{i_1}$, without summation over $\vec T$. 
The idea behind this combinatorial lemma is the following: 
we attach to every vertex $i\in V$ a sequence of 
length $k_i$ consisting of directed edges $(i,j)$ such that vertex $j\in V$ occurs 
precisely $k_{ij}$ times and the union of the last edges from all $i\neq i_1$ 
yields $\vec T$. Starting at $i_0$ we construct a path by always traversing
the first edge which was not used earlier at the current location. 
This yields that $|\Pi_{i_0,i_1}(k,\vec T)|$ equals the following product 
over multinomial coefficients, counting the number of choices at every vertex $i\in V$:
\begin{align}
|\Pi_{i_0,i_1}(k,\vec T)|
=\prod_{i\in V} \frac{(k_i-1_{\{i\neq i_1\}})!}{\prod_{\substack{j\in V:\\ (i,j)\in\vec E}}
(k_{ij}-1_{\{(i,j)\in\vec T\}})!}
=\frac{k_{i_1}!\prod_{i\in V\setminus\{i_1\}}(k_i-1)!}{\prod_{(i,j)\in\vec T}(k_{ij}-1)!
\prod_{(i,j)\in\vec{E}\setminus\vec T}k_{ij}!}
\end{align}
\end{appendix}

\noindent{\bf Acknowledgements:} The authors would like to thank an 
anonymous referee and the associate editor for very constructive comments
helping us to improve the paper.

This work is supported by National Science Foundation of China (NSFC), 
grant No.\ 11771293, and by the Agence Nationale de la Recherche (ANR) 
in France, project MALIN, No.\ ANR-16-CE93-0003.

\end{document}